\newtheorem{theorem}{\bf Theorem}[section]
\newtheorem{lemma}[theorem]{\bf Lemma}
\newtheorem{corollary}[theorem]{\bf Corollary}
\newtheorem{proposition}[theorem]{\bf Proposition}
\newtheorem{conjecture}[theorem]{\bf Conjecture}
\newtheorem{problem}[theorem]{\bf Problem}
\theoremstyle{definition}
\newtheorem*{claim}{Claim}
\newtheorem*{remark}{\bf Remark}
\newtheorem{definition}[theorem]{\bf Definition}
\def\eps{\varepsilon}
\def\cT{\mathcal{T}}
\title{\vspace{-0.9cm} Small subgraphs with large average degree}
\author{
	Oliver Janzer\thanks{Department of Mathematics, ETH, Z\"urich, Switzerland. Research supported by an ETH Z\"urich Postdoctoral Fellowship 20-1 FEL-35. Email: \textbf{oliver.janzer@math.ethz.ch}.}
	\and
	Benny Sudakov\thanks{Department of Mathematics, ETH, Z\"urich, Switzerland. Research supported in part by SNSF grant 200021\_196965. Email: \textbf{\{benjamin.sudakov, istvan.tomon\}@math.ethz.ch}.}
	\and
	Istv\'{a}n Tomon\footnotemark[2]
}
\date{}
\begin{document}
	
	\maketitle
	\begin{abstract}
		In this paper we study the fundamental problem of finding small dense subgraphs in a given graph. For a real number $s>2$, we prove that every graph on $n$ vertices with average degree at least $d$ contains a subgraph of average degree at least $s$ on at most $nd^{-\frac{s}{s-2}}(\log d)^{O_s(1)}$ vertices. This is optimal up to the polylogarithmic factor, and resolves a conjecture of Feige and Wagner. In addition, we show that every graph with $n$ vertices and average degree at least $n^{1-\frac{2}{s}+\varepsilon}$ contains a subgraph of average degree at least $s$ on $O_{\varepsilon,s}(1)$ vertices, which is also optimal up to the constant hidden in the $O(.)$ notation, and resolves a conjecture of Verstra\"ete.
		
	\end{abstract}
	
	\section{Introduction}
	Given a graph $G$ and a parameter $k$, the \emph{densest $k$-subgraph problem} asks to find a $k$-vertex subgraph of $G$ of maximum average degree. This is one of the central problems in theoretical computer science. It is NP-hard, and has no polynomial-time approximation scheme (PTAS) under certain complexity theoretic assumptions \cite{F02,K04}. On the other hand, the currently best known approximation algorithm achieves an $O(n^{1/4})$-approximation \cite{BCCFV}. 
	
	In addition to the algorithmic perspective, another natural direction for the above problem is to understand the maximum density of the small subgraphs of a given graph which one can theoretically guarantee. The precise problem under consideration, proposed by Feige and Wagner \cite{FW16}, is that given a   positive integer $n$ and real numbers $d,s$ satisfying $d\geq s\geq 2$, what is the minimum of $t=t(n,d,s)$ such that every graph $G$ on $n$ vertices with average degree at least $d$ contains a subgraph of average degree at least $s$ on at most $t$ vertices. This question also falls squarely within the context of the so called local-global principle, that states that one can obtain global understanding of a structure from having a good understanding of its local properties, or vice versa. This phenomenon has been ubiquitous in many areas of mathematics and beyond, see e.g. \cite{local-global-2,BS,gromov,local-global-1}.
	
	The question of Feige and Wagner, in the case $s=2$, is equivalent to the famous \emph{girth problem}, that asks for the length of the shortest cycle in a graph on $n$ vertices with average degree $d$. This problem is extensively studied, and using our notation, it is well known that $t(n,d,2)=\Theta(\log_{d-1}n)$ (see e.g. \cite{B04}, page 104, Theorems 1.1 and 1.2). However, it is a major open problem to determine the leading coefficient.	Much less is known if $s>2$. A simple probabilistic argument gives the following result.
	
	\begin{proposition} \label{prop:lower bound feige}
		For every $s>2$, there is a positive $c_s$ such that for all $s\leq d\leq n-1$, we have $t(n,d,s)\geq c_s nd^{-\frac{s}{s-2}}$. In other words, for every $s\leq d\leq n-1$, there is an $n$-vertex graph $G$ with average degree at least $d$ in which every subgraph with average degree at least $s$ has at least $c_snd^{-\frac{s}{s-2}}$ vertices.
	\end{proposition}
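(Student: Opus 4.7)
The plan is to prove this by the probabilistic method, using the Erd\H{o}s--R\'enyi random graph $G = G(n,p)$ with $p = Cd/n$ for a suitably large constant $C = C(s)$. A standard Chernoff bound shows that $e(G)$ concentrates around $\binom{n}{2}p \approx Cnd/2$, so with probability $1-o(1)$ the average degree of $G$ is at least $d$.

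The main task is to show that, with high probability, $G$ has no subgraph of average degree at least $s$ on at most $k_0 := c_s n d^{-s/(s-2)}$ vertices, for a suitable $c_s > 0$. Any such subgraph lies on some vertex set $S$ with $e(G[S]) \ge s|S|/2$, so it suffices to bound the probability that some $k$-subset $S$ with $\lceil s \rceil + 1 \le k \le k_0$ satisfies $e(G[S]) \ge sk/2$. For a fixed such $S$, the standard union bound gives
\[ \Pr[e(G[S]) \ge sk/2] \le \binom{\binom{k}{2}}{\lceil sk/2\rceil} p^{\lceil sk/2\rceil} \le \left(\frac{ekp}{s}\right)^{sk/2}, \]
using that $ekp/s \le 1$ throughout the relevant range of $k$. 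Taking a union bound over all $k$-subsets then yields
\[ \binom{n}{k}\left(\frac{ekp}{s}\right)^{sk/2} \le \left[\frac{en}{k}\left(\frac{ekp}{s}\right)^{s/2}\right]^k. \]
Substituting $p = Cd/n$, the bracketed expression rearranges to a constant (depending on $s$ and $C$) times $(k\, d^{s/(s-2)}/n)^{(s-2)/2}$, and so falls below $1/2$ precisely when $k \le c_s n d^{-s/(s-2)}$ for an appropriate $c_s$. Summing the resulting $(1/2)^k$ bound over $k$ from $\lceil s\rceil + 1$ up to $k_0$ gives total failure probability $o(1)$.

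Combining the two high-probability events by a final union bound shows that with positive probability $G$ witnesses the desired lower bound. The edge case $c_s n d^{-s/(s-2)} < \lceil s \rceil + 1$ is vacuous, since any subgraph of average degree at least $s > 2$ must have at least $\lceil s \rceil + 1$ vertices. There is no genuine obstacle here: the whole argument is a routine application of the probabilistic method, and the only thing requiring care is tracking the constant $c_s$ through the exponent rearrangement $(s/2-1) \cdot 2/(s-2) = 1$ that produces the $n/d^{s/(s-2)}$ scaling.
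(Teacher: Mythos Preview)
Your approach is essentially identical to the paper's: sample $G(n,p)$ with $p=\Theta(d/n)$, use Chernoff for the average degree, and union-bound over $k$-subsets with $e(G[S])\geq sk/2$, arriving at the same expression $(\text{const})\cdot k^{(s-2)/2}d^{s/2}n^{-(s-2)/2}$ inside the bracket. The paper differs only cosmetically---it takes $p=2d/(n-1)$ (there is no gain in choosing $C$ large; any $C>1$ works), treats the range $d\geq (n-1)/2$ separately so that $p<1$, and uses explicit constants rather than $o(1)$ so that the argument covers all $n$ uniformly (note also that your union-bound sum is a small constant, not $o(1)$, but this still yields positive probability).
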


	Feige and Wagner \cite{FW16} proposed the conjecture that this lower bound on $t(n,d,s)$ is optimal, up to polylogarithmic factors in $n$. In the special case  $s\approx 4$, they also proved certain results in the support of it. First, they showed that if $\varepsilon>0$ and $s=4-\varepsilon$, then $t(n,d,s)=O_{\varepsilon}(nd^{-2})$. Second, they proved that $t(n,d,4)=O_{\varepsilon}(nd^{-1.8+\varepsilon})$ for every $\varepsilon>0$. Also, Alon and Hod (personal communication) proved the aforementioned conjecture for certain special values of $s$ and a limited range of $d$.  Here, we completely settle the conjecture of Feige and Wagner with the following theorem (which is even stronger as the error term is logarithmic in $d$ instead of $n$).
	
	\begin{theorem} \label{thm:polylog_average_deg}
		For every $s>2$, there is a constant $C=C(s)$ such that the following holds for every sufficiently large $d$. Let $G$ be an $n$-vertex graph with average degree at least $d$, where $d\leq n^{\frac{s-2}{s}}$. Then there is a non-empty set $R\subset V(G)$ of size at most $nd^{-\frac{s}{s-2}}(\log d)^C$ such that $G[R]$ has average degree at least $s$.
	\end{theorem}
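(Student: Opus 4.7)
My plan is to begin with a series of standard reductions on $G$. By iteratively deleting vertices of degree less than $d/2$, I may assume $G$ has minimum degree at least $d/2$; then using a dyadic partition of the degree sequence together with a random bipartition, I may further assume $G$ is bipartite with parts of comparable size in which every vertex has degree in some interval $[d', 2d']$ with $d'$ within a polylogarithmic factor of $d$. These reductions cost only $(\log d)^{O(1)}$ in the final bound, and they reduce the problem to finding a small $R$ in a near-regular bipartite host.

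On this host, the main engine I would try is dependent random choice combined with a common-neighborhood step, iterated if necessary. Pick a random anchor $(v_1, \dots, v_t)$ of $t = t(s)$ vertices and set $S := N(v_1) \cap \cdots \cap N(v_t)$. In a near-regular graph, $|S|$ concentrates around $(d')^t / n^{t-1}$, and tuning $t$ makes this comparable to the target size $n d^{-s/(s-2)}$. The next step is to compute, in expectation over the anchor, the number of edges of $G$ spanned by $S$; this quantity can be expressed in terms of higher co-degree moments of $G$. Either the computation shows that some choice of anchor yields $G[S]$ of average degree at least $s$, yielding the desired $R$, or the relevant co-degree moments are small --- in which case one can either recover the subgraph by a K\H{o}v\'{a}ri--S\'{o}s--Tur\'{a}n-type argument, or iterate the anchor step on a rescaled subproblem. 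To land on the exponent $s/(s-2)$ exactly, I anticipate that a bounded number (depending only on $s$) of iterations is needed, each losing at most a polylogarithmic factor, all absorbed into the final $(\log d)^C$ slack. Proposition~\ref{prop:lower bound feige} shows this exponent is sharp, so one expects it to arise as the optimum of a one-parameter family in the refinement.

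The main obstacle, I expect, is controlling the anchor-step calculation tightly enough. The exponent $s/(s-2)$ is precisely what the first-moment calculation predicts for the random graph $G(n, d/n)$, and matching it deterministically requires ruling out the situation where $G[S]$ is dense only because a few ``hub'' vertices in $S$ absorb most of the induced edges. This will likely require a cleaning pass that removes a few high-degree vertices from $S$, combined with a second-moment or concentration bound on the good event. Designing the cleaning so that both the size bound and the $s$-average-degree condition survive --- and arranging the sampling distribution in the anchor step accordingly --- is, I believe, the central technical crux; if it succeeds, the bipartite-to-general passage and the iteration bookkeeping should be comparatively routine.
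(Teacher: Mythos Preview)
Your proposal has a real gap at precisely the step you flag as the ``central technical crux,'' and I do not see a route through it within the dependent-random-choice framework. The common-neighbourhood set $S=N(v_1)\cap\cdots\cap N(v_t)$ is engineered so that small subsets of $S$ have many \emph{outside} common neighbours; it carries no information about $e(G[S])$. Test the scheme on $G=G(n,d/n)$, which up to constants realises the lower bound of Proposition~\ref{prop:lower bound feige}. For any set $S$ of the target size $|S|\approx nd^{-s/(s-2)}$ one has $e(G[S])\approx |S|^2 d/(2n)$, hence induced average degree $\approx |S|\,d/n=d^{-2/(s-2)}\ll 1$, nowhere near $s$. Your dichotomy (``either $G[S]$ is dense, or the co-degree moments are small'') collapses here: the co-degree moments take exactly their random-graph values, neither large enough to force a dense $G[S]$ nor small enough to trigger a K\H{o}v\'{a}ri--S\'{o}s--Tur\'{a}n alternative. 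Iterating does not help, since passing to $G[S]$ preserves the edge density $d/n$ and hence the ratio of parameters. The fallback of taking $R=\{v_1,\dots,v_t\}\cup S$ and using the complete bipartite edges only gives average degree $\approx 2t$, which forces $t\ge s/2$ to be an integer and, throughout the range $d\le n^{(s-2)/s}$, yields $|S|=d^t/n^{t-1}<1$ whenever $t\ge 2$. (Your bipartite reduction makes this worse: if $S$ lies on one side, $G[S]$ is empty.)

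The paper builds the dense set by a completely different mechanism. For a rational $\rho$ slightly above $s/2$ it fixes a Bukh--Conlon balanced caterpillar $T$ with $\rho_T\approx\rho$; balancedness yields, by a one-line induction, that any union $H$ of copies of $T$ sharing a common leaf set $L$ satisfies $e(H)\ge(|V(H)|-|L|)\rho_T$, so $H$ has average degree close to $2\rho>s$ as soon as $|V(H)|\gg|L|$. One then (i) shows that an $n$-vertex $(\rho,r)$-sparse graph of average degree $d$ contains at least $(c_0 d)^{|V(T)|-1}$ copies of $T$, by randomly sparsifying and applying the Friedman--Pippenger embedding theorem (the $(\rho,r)$-sparseness is exactly what supplies the expansion hypothesis), and (ii) pigeonholes over the $\binom{n}{|L|}$ possible leaf sets to find many copies with the same leaves, whose union---or a suitable subcollection---lands in the target size window. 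The exponent $s/(s-2)$ falls out when one balances the count in (i) against the pigeonhole loss in (ii) by choosing the tree parameters $(a,t)$ with $t/a\approx\rho$ and $t\approx r/\log d$; this freedom is what lets the argument hit every real $s>2$. The hub-vertex obstruction you worry about never appears, because the construction is additive (gluing trees at their leaves) rather than intersective (taking common neighbourhoods).
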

	
	\noindent
	While our proof of this result is non-algorithmic, it gives the best theoretical lower bound on the average degree one is guaranteed to find. It would be interesting to decide whether there is a polynomial time algorithm that finds a subgraph that achieves the bound provided by the above theorem.
	
	Theorem \ref{thm:polylog_average_deg} cannot be used to find a constant sized (independent of $n$) subgraph with large average degree. In this case, one cannot expect a similar answer as before, as the random deletion method shows the following.
	
	\begin{proposition}\label{prop:lower_bound2}
		For every $s>2$ and positive integer $t$, there exists  $\varepsilon=\varepsilon(s,t)>0$ such that the following holds for every sufficiently large $n$. There exists a graph $G$ on $n$ vertices with average degree at least $n^{1-\frac{2}{s}+\varepsilon}$ such that every subgraph of $G$ on at most $t$ vertices has average degree less than $s$.
	\end{proposition}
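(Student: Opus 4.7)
The plan is to use the random deletion (alteration) method, as the statement itself hints. Fix a small constant $\varepsilon = \varepsilon(s,t) > 0$ to be chosen later, set $p = n^{-2/s + \varepsilon}$, and sample $G \sim G(n,p)$. First observe that $G$ has no subgraph on at most $t$ vertices with average degree $\geq s$ if and only if every subset $S \subseteq V(G)$ with $3 \leq |S| \leq t$ satisfies $e(G[S]) < s|S|/2$: indeed, any subgraph $H$ with $V(H) = S$ has $e(H) \leq e(G[S])$. Call such an $S$ \emph{bad}. The strategy is to show that $G(n,p)$ has many edges but few bad sets, then delete one vertex from each bad set.

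Standard Chernoff bounds give, with probability $1-o(1)$, that $e(G) \geq \tfrac{1}{4}\binom{n}{2}p = \Omega(n^{2-2/s+\varepsilon})$ and maximum degree $\Delta(G) \leq 2np = O(n^{1-2/s+\varepsilon})$ (the latter requires $np \gg \log n$, which holds since $s > 2$ makes $np = n^{\Omega(1)}$). For the bad subsets of a given size $k$, the union bound yields
\[
\mathbb{E}\bigl[\#\{\text{bad $S$ with } |S|=k\}\bigr] \leq \binom{n}{k}\binom{\binom{k}{2}}{\lceil sk/2\rceil} p^{\lceil sk/2\rceil} \leq 2^{k^2}\, n^{k}\, p^{sk/2} = 2^{k^2}\, n^{\varepsilon sk/2},
\]
using $n^{k} p^{sk/2} = n^{k + (-2/s+\varepsilon)sk/2} = n^{\varepsilon sk/2}$. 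Summing over $3 \leq k \leq t$ gives a total expectation of at most $t \cdot 2^{t^2} \cdot n^{\varepsilon st/2}$, which is $o(n)$ provided $\varepsilon < 2/(st)$; take e.g.\ $\varepsilon = 1/(st)$.

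Combining these estimates via Markov's inequality, with positive probability there is a realization of $G$ having simultaneously $e(G) \geq \tfrac{1}{4}\binom{n}{2}p$, $\Delta(G) = O(np)$, and at most $o(n)$ bad subsets. Delete one vertex from each bad set; this removes $o(n)$ vertices and hence at most $o(n)\cdot O(np) = o(n^{2-2/s+\varepsilon})$ edges. The resulting graph $G'$ therefore has no bad subsets of size $\leq t$, at least $n/2$ vertices, and $\Omega(n^{2-2/s+\varepsilon})$ edges. Padding $G'$ with isolated vertices back up to $n$ vertices changes the average degree by at most a factor of $2$, yielding an $n$-vertex graph with average degree $\Omega(n^{1-2/s+\varepsilon}) \geq n^{1-2/s+\varepsilon'}$ for any $\varepsilon' < \varepsilon$ and $n$ sufficiently large, while preserving the property that no subgraph on at most $t$ vertices has average degree $\geq s$.

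The only delicate point is the choice of $\varepsilon$: it must be strictly positive so that the average degree exceeds $n^{1-2/s}$, yet small enough (roughly $\varepsilon < 2/(st)$) that the expected number of dense subsets of bounded size stays $o(n)$, so that the deletion step does not destroy too many edges. This is the main --- and essentially only --- obstacle; once $\varepsilon$ is balanced, the rest is routine first-moment and concentration bookkeeping.
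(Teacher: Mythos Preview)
Your proof is correct and follows the same random-deletion strategy as the paper, with the same choice $\varepsilon = 1/(st)$ and the same first-moment bound on dense small configurations. The one implementation difference is that you delete a \emph{vertex} from each bad set (forcing you to control $\Delta(G)$ and then pad with isolated vertices), whereas the paper deletes a single \emph{edge} from each bad subgraph; since the expected number of bad configurations is $O(n)$, edge deletion costs only $O(n)$ edges directly, which lets the paper skip the max-degree and padding steps and finish with just $\mathbb{E}(X-Y) > n^{2-2/s+\varepsilon}$.
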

	
	A similar argument shows that in case $d=\Omega\big(n^{\frac{s-2}{s}}\big)$, the logarithmic error term is indeed needed in Theorem \ref{thm:polylog_average_deg}. Motivated by applications from \cite{NV08} to parity check matrices, Verstr\"aete (see \cite{JN17}) conjectured that this lower bound presented in Proposition \ref{prop:lower_bound2} is optimal in a certain sense. More precisely, he proposed the conjecture that for every $s>2$ and $\varepsilon>0$ there exists some $t=t(s,\varepsilon)$ such that every graph on $n$ vertices with average degree at least $n^{1-\frac{2}{s}+\varepsilon}$ must contain a subgraph on at most $t$ vertices with average degree at least $s$. In the special case $s$ is an integer, this was proved by Jiang and Newman \cite{JN17}. Janzer \cite{J20} strengthened this result, obtaining under the same hypothesis an $s$-regular subgraph. More precisely he proved that if $G$ is a graph on $n$ vertices with at least $n^{2-\frac{1}{r}+\frac{1}{k+r-1}+\varepsilon}$ edges for sufficiently large $n$, then $G$ contains an $r$-blowup of the cycle $C_{2k}$ (note that the $r$-blowup of the cycle $C_{2k}$ is $s=2r$-regular and by taking $k$ large one can make the term $\frac{1}{k+r-1}$ arbitrarily small). In our next theorem, we prove the conjecture of Verstra\"ete for all real values of $s>2$.
	
	\begin{theorem} \label{thm:bounded_size_average_deg}
		For every $s>2$ and $\eps>0$, there is a positive integer $t$ such that the following holds for all sufficiently large $n$. Let $G$ be an $n$-vertex graph of average degree $d\geq n^{1-\frac{2}{s}+\eps}$. Then there is a non-empty set $R\subset V(G)$ of size at most $t$ such that $G[R]$ has average degree at least~$s$.
	\end{theorem}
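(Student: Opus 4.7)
\textbf{Proof plan for Theorem~\ref{thm:bounded_size_average_deg}.}
The plan is to use Theorem~\ref{thm:polylog_average_deg} in two stages: first to drop from $n$ vertices down to polylogarithmic size, then iteratively from polylog down to a constant. Since the hypothesis $d \geq n^{1-2/s+\varepsilon}$ violates the assumption $d \leq n^{(s-2)/s}$ of Theorem~\ref{thm:polylog_average_deg}, I would first perform random sampling. Fix a small $\delta = \delta(s,\varepsilon)>0$, set $s^\ast := s+\delta$, and take a uniform random subset $U \subset V(G)$ of size $m := n^{(s^\ast/s)(1-s\varepsilon/2)}$ (for $\delta$ small enough we have $m<n$). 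A standard Chernoff argument shows that with high probability $|E(G[U])| = (1 \pm o(1))|E(G)|\cdot m^2/n^2$, so $G[U]$ has average degree $\Theta(m^{(s^\ast-2)/s^\ast})$---exactly at the upper boundary of the range of Theorem~\ref{thm:polylog_average_deg} with parameter $s^\ast$. Applying that theorem to $G[U]$ yields a subgraph $H_0 \subseteq G[U]$ with $|V(H_0)| \leq (\log n)^{C(s^\ast)}$ and average degree at least $s^\ast$.

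For the second stage I would iterate Theorem~\ref{thm:polylog_average_deg}: given $H_i$ on $N_i$ vertices with average degree at least $s^\ast$, apply Theorem~\ref{thm:polylog_average_deg} to $H_i$ with parameter $s^\ast$. Because the input average degree $s^\ast$ is a fixed constant, the bound from Theorem~\ref{thm:polylog_average_deg} reads $N_{i+1} \leq N_i / \lambda$, where $\lambda := (s^\ast)^{s^\ast/(s^\ast-2)}/(\log s^\ast)^{C(s^\ast)}$ is a constant strictly greater than $1$ for any fixed $s^\ast>2$. This geometric shrinking is applied $k = O(\log\log n)$ times, after which $N_k$ drops below the termination threshold $(s^\ast)^{s^\ast/(s^\ast-2)}$, at which point $N_k = O_{s,\varepsilon}(1)$ and we output $R := V(H_k)$, whose average degree is at least $s^\ast \geq s$ by the invariant maintained along the way.

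The hard part is to verify that Theorem~\ref{thm:polylog_average_deg} really is applicable at every one of the $O(\log\log n)$ iterations---especially the ``sufficiently large $d$'' clause, since throughout the iteration the input average degree is merely the constant $s^\ast$. Two quantitative constraints pull against each other here: $\delta$ must be large enough that $s^\ast = s+\delta$ exceeds the implicit threshold in Theorem~\ref{thm:polylog_average_deg}, yet small enough that the initial sampling is legitimate ($m<n$, which demands $\delta < s^2\varepsilon / (2-s\varepsilon)$). If no single $\delta$ achieves both, the remedy is to apply Theorem~\ref{thm:polylog_average_deg} along a ladder of slowly decreasing parameters $s+\delta > s+\delta(1-1/k) > \cdots > s$, distributing the $\delta$ of average-degree slack across the iterations so that the final subgraph still has average degree at least $s$. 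A secondary concern is that Chernoff concentration in the sampling step is valid only when the expected edge count $dm^2/n \to \infty$, which follows immediately from $s>2$; apart from this routine check, the core of the argument is the constant-factor shrinking coming from $\lambda > 1$ combined with $k = O(\log\log n)$ iterations.
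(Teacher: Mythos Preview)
Your iteration stage has a genuine gap that the ladder idea does not repair. After the first sampling-plus-Theorem~\ref{thm:polylog_average_deg} step you hold a graph $H_0$ on $N_0\le(\log n)^{C}$ vertices about which you know only that its average degree is at least the \emph{fixed constant}~$s^\ast$. Theorem~\ref{thm:polylog_average_deg} is stated for ``sufficiently large $d$'' (in terms of the target parameter), so you cannot invoke it with $d=s^\ast$; and the proposed ladder of targets $s+\delta>s+\delta(1-1/k)>\cdots$ does not help, because at every rung the input average degree is still only the previous target, hence still a bounded constant. The inequality $\lambda>1$ is also unproved: you have no control over $C(s^\ast)$, and for $s^\ast>e$ the factor $(\log s^\ast)^{C(s^\ast)}$ can swamp $(s^\ast)^{s^\ast/(s^\ast-2)}$. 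Most tellingly, the information ``$H_0$ has $N_0$ vertices and average degree $\ge s^\ast$'' is simply too weak to continue: an $\lceil s^\ast\rceil$-regular graph on $N_0$ vertices with girth $\Theta(\log N_0)$ satisfies it, yet every subgraph of it on $o(\log N_0)$ vertices is a forest and hence has average degree below~$2$. Once you pass to $H_0$ as a black box you have thrown away the global density of $G$, and there is no way to recover a constant-sized dense piece from $H_0$ alone.

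The paper does \emph{not} iterate Theorem~\ref{thm:polylog_average_deg}. Instead it re-runs the underlying balanced-tree/counting argument with different parameters: one fixes a balanced caterpillar $T$ whose size depends only on $s$ and $\eps$ (roughly $|V(T)|\approx 2s/\eps$), counts copies of $T$ in $G$ directly using the hypothesis $d\ge n^{1-2/s+\eps}$, and pigeonholes on leaf sets. Because $|V(T)|=O_{s,\eps}(1)$, the union of many leaf-sharing copies already lives on $O_{s,\eps}(1)$ vertices, and Lemma~\ref{lem:union of balanced trees} delivers the required average degree in one shot. The point is that the extra $n^{\eps}$ in the hypothesis is spent on making the \emph{tree} constant-sized, not on an iteration; your plan spends it on the first sampling step and is then left with nothing to drive the descent from polylog to constant.
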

	
	Our results are closely related to the problem of Erd\H{o}s, Faudree, Rousseau and Schelp on
	finding small subgraphs of large minimum degree. In \cite{EFRS90}, they determined the minimal number of edges in a graph on $n$ vertices which guarantees a \emph{proper} subgraph (i.e., with $u<n$ vertices) of minimum degree at least $s$ (see \cite{S19} for additional details and recent developments). Erd\H os, Faudree, Rousseau and Schelp \cite{EFRS91} further asked the following general question. Given positive integers $n$ and $s$, and a positive real number $d$ satisfying $d\geq s\geq 2$, what is the minimum of $u=u(n,d,s)$ such that every graph $G$ on $n$ vertices with average degree at least $d$ contains a subgraph of minimum degree at least $s$ on at most $u$ vertices? It is reasonable to suspect that $u(n,d,s)\approx t(n,d,s)$, that is, that Theorems~\ref{thm:polylog_average_deg} and~\ref{thm:bounded_size_average_deg} hold with the average degree of $G[R]$ replaced with its minimum degree. In case $s$ is even, the minimum degree version of Theorem \ref{thm:bounded_size_average_deg} does hold by the aforementioned results of \cite{J20}  and \cite{JN17}, the first of which even guarantees a regular subgraph. Moreover, the case $s=3$ follows from a recent result of Janzer \cite{J21}, which refutes a conjecture of Erd\H{o}s and Simonovits \cite{E81} (and again provides a regular subgraph). However, the cases when $s$ is odd and greater than 3 remain open. On the other hand, the minimum degree variant of Theorem \ref{thm:polylog_average_deg} is completely open for every $s\geq 3$, and our methods do not seem to be adaptable for this problem. At least, by noting that every graph of average degree at least $2s$ contains a subgraph of minimum degree at least $s$ (see Lemma~\ref{lemma:mindeg}), we get the following immediate corollary of Theorem \ref{thm:polylog_average_deg}.
	
	\begin{corollary}
		For every integer $s\geq  2$, there is a constant $C=C(s)$ such that the following holds for every sufficiently large $d$. Let $G$ be an $n$-vertex graph with average degree at least $d$, where $d\leq n^{\frac{s-1}{s}}$. Then there is a non-empty set $R\subset V(G)$ of size at most $nd^{-\frac{s}{s-1}}(\log d)^C$ such that $G[R]$ has minimum degree at least $s$.
	\end{corollary}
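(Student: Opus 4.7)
The plan is to deduce the corollary directly from Theorem~\ref{thm:polylog_average_deg} by applying it with the parameter $s$ replaced by $2s$, and then invoking the standard degeneracy-type fact from Lemma~\ref{lemma:mindeg} that every graph of average degree at least $2s$ contains a subgraph of minimum degree at least $s$.

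First I would verify the parameter substitution. For integer $s\geq 2$ we have $2s>2$, so Theorem~\ref{thm:polylog_average_deg} is applicable with the real parameter $2s$ in place of its $s$. The range of $d$ permitted in that theorem becomes $d\leq n^{(2s-2)/(2s)}=n^{(s-1)/s}$, which matches the hypothesis of the corollary. The conclusion then yields a constant $C_0=C(2s)$ (depending only on $s$) and a non-empty set $R\subseteq V(G)$ with
\[
|R|\;\leq\; n\,d^{-\frac{2s}{2s-2}}(\log d)^{C_0}\;=\;n\,d^{-\frac{s}{s-1}}(\log d)^{C_0},
\]
such that $G[R]$ has average degree at least $2s$.

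Next I would apply Lemma~\ref{lemma:mindeg} to the induced subgraph $G[R]$: since its average degree is at least $2s$, it contains a non-empty subgraph $G[R']$ with $R'\subseteq R$ of minimum degree at least $s$. Since $|R'|\leq |R|$, setting $C:=C_0$ finishes the proof.

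There is essentially no obstacle here: once Theorem~\ref{thm:polylog_average_deg} (the deep content of the paper) and the folklore Lemma~\ref{lemma:mindeg} are available, the corollary is a short deduction. The only bookkeeping is checking the algebraic identities $\frac{2s}{2s-2}=\frac{s}{s-1}$ and $\frac{2s-2}{2s}=\frac{s-1}{s}$, so that the exponents produced by Theorem~\ref{thm:polylog_average_deg} agree exactly with those stated in the corollary, and confirming that ``$d$ sufficiently large'' with respect to the parameter $2s$ is the same qualitative assumption as ``$d$ sufficiently large'' with respect to $s$.
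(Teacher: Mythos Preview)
Your proposal is correct and matches the paper's own reasoning: the paper states this corollary as an immediate consequence of Theorem~\ref{thm:polylog_average_deg} applied with $2s$ in place of $s$, together with Lemma~\ref{lemma:mindeg}. The algebraic identifications $\frac{2s-2}{2s}=\frac{s-1}{s}$ and $\frac{2s}{2s-2}=\frac{s}{s-1}$ are exactly the ones the paper is implicitly using.
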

	
	\section{Small subgraphs of large average degree}
	
	In this section, we prove Theorems \ref{thm:polylog_average_deg} and \ref{thm:bounded_size_average_deg}. Both proofs follow the same argument, however, with a different range of parameters. Let us give a brief outline of this argument.
	
	The key idea is that for every rational number $\rho>1$ we construct a tree $T$, which we refer to as a \emph{balanced tree}, with the following property. Let $q$ be the number of leaves of $T$. If $H$ is a graph that is the union of copies of $T$ having the same set of leaves, then the average degree of $H$ is at least $2\rho(1-\frac{q}{|V(H)|})$. 
	
	Now suppose we are given a graph $G$ with $n$ vertices and average degree at least $d$,  which does not contain a subgraph of order at most $t\approx nd^{-\frac{s}{s-2}}$ and average degree at least $s$. Take $\rho$ such that $2\rho$ is slightly larger than $s$, let $T$ be the balanced tree with respect to $\rho$, and let $q$ be the number of leaves of $T$. By counting the number of subgraphs of $G$ isomorphic to $T$ and using the pigeonhole principle, we find a large collection $\mathcal{T}$ of copies of $T$ in $G$, all having the same set of leaves. Let $H$ be their union. We show that some subgraph of $H$ will contradict our assumption on $G$, i.e. it has order at most $t$ and average degree at least $s$. In order to show this, we consider two cases depending on  the number of vertices in $H$. If $|V(H)|>t$, we take some sub-collection $\mathcal{T}'$ of $\mathcal{T}$ such that $H'$, which denotes the union of the copies of $T$ in $\mathcal{T}'$,  has order roughly $t$. Our choice of parameters will guarantee that $2\rho(1-\frac{q}{|V(H')|})\geq s$, so $H'$ suffices by the above mentioned property of $T$. Otherwise, we argue that unless $H$ has average degree at least $s$, it cannot contain the described number of copies of $T$, and we are done again.
	
	\subsection{Preliminaries}
	
	In this section, we prove the lower bounds (Propositions \ref{prop:lower bound feige} and \ref{prop:lower_bound2}) and collect some basic results. First, let us start with the following simple consequence of the multiplicative Chernoff bound.
	
	\begin{lemma}\label{lemma:chernoff}
		Let $X$ be the sum of independent indicator (i.e. 0-1 valued) random variables. Then $\mathbb{P}(X\leq \frac{\mathbb{E}(X)}{2})<e^{-\frac{\mathbb{E}(X)}{8}}$.
	\end{lemma}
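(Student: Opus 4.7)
This is the standard lower-tail multiplicative Chernoff bound at the specific threshold $\delta = 1/2$, so the plan is simply to run the exponential-moment (Cram\'er--Chernoff) method and verify that the constant comes out to $1/8$. Write $X = X_1 + \dots + X_n$ with $\mathbb{P}(X_i = 1) = p_i$, let $\mu = \mathbb{E}(X) = \sum_i p_i$, and fix some $t > 0$ to be chosen later.

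The first step is to apply Markov's inequality to the nonnegative random variable $e^{-tX}$:
\[
\mathbb{P}\!\left(X \leq \tfrac{\mu}{2}\right) = \mathbb{P}\!\left(e^{-tX} \geq e^{-t\mu/2}\right) \leq e^{t\mu/2}\,\mathbb{E}\!\left(e^{-tX}\right).
\]
By independence of the $X_i$, the moment generating function factors as $\mathbb{E}(e^{-tX}) = \prod_i \mathbb{E}(e^{-tX_i})$. For each factor we compute $\mathbb{E}(e^{-tX_i}) = 1 - p_i(1 - e^{-t})$, and then apply the elementary inequality $1 + x \leq e^x$ (with $x = -p_i(1-e^{-t})$) to get $\mathbb{E}(e^{-tX_i}) \leq \exp(-p_i(1-e^{-t}))$. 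Multiplying and using $\sum_i p_i = \mu$ yields
\[
\mathbb{P}\!\left(X \leq \tfrac{\mu}{2}\right) \leq \exp\!\left(\tfrac{t\mu}{2} - \mu(1 - e^{-t})\right).
\]

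The final step is to choose $t$ so that the exponent is at most $-\mu/8$. Taking $t = \ln 2$ gives $1 - e^{-t} = 1/2$ and $t/2 = (\ln 2)/2$, so the exponent becomes $\mu\bigl(\tfrac{\ln 2 - 1}{2}\bigr)$. Since $\tfrac{1-\ln 2}{2} > \tfrac{1}{8}$ (indeed $1 - \ln 2 \approx 0.307 > 0.25$), this is strictly less than $-\mu/8$ and we get the desired strict inequality $\mathbb{P}(X \leq \mu/2) < e^{-\mu/8}$. There is no real obstacle here; the only thing to keep an eye on is ensuring the inequality is strict, which follows because the bound $1 + x < e^x$ is strict whenever $x \neq 0$ (and the statement is trivial if $\mu = 0$, where $X = 0$ almost surely and the right-hand side equals $1$).
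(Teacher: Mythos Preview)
Your proof is correct and is exactly the standard Cram\'er--Chernoff argument; the paper itself gives no proof, merely citing the lemma as ``the following simple consequence of the multiplicative Chernoff bound.'' One small quibble: your final parenthetical about the case $\mu = 0$ is not quite right --- there $\mathbb{P}(X \leq 0) = 1 = e^{0}$, so the \emph{strict} inequality actually fails --- but this degenerate case is irrelevant to every application in the paper, and your strictness argument for $\mu > 0$ (via $1 + x < e^{x}$ for $x \neq 0$) is sound.
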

	
	Next, we present the promised probabilistic lower bound arguments. 
	
	\begin{proof}[Proof of Proposition \ref{prop:lower bound feige}]
		Let $c_s$ be sufficiently small. If $d\geq \frac{n-1}{2}$, then $c_snd^{-\frac{s}{s-2}}<1$, so we can take $G$ to be any $n$-vertex graph with average degree at least $d$.
		
		Else, let $G$ be a random graph on $n$ vertices in which each edge is chosen with probability $p=\frac{2d}{n-1}$, independently of all other edges. Then $|E(G)|$ is the sum of independent indicator random variables and has mean $nd$, so by Lemma \ref{lemma:chernoff}, the probability that $G$ has fewer than $nd/2$ edges (i.e., that $G$ has average degree less than $d$) is at most $\exp(-\frac{nd}{8})\leq e^{-\frac{1}{8}}\leq \frac{99}{100}$.
		
		Let $R$ be a subset of $V(G)$ of size $r\leq c_s nd^{-\frac{s}{s-2}}$. By the union bound, the probability that $G[R]$ has average degree at least $s$ (i.e., that $G[R]$ has at least $\frac{rs}{2}$ edges) is at most $\binom{\binom{r}{2}}{\lceil rs/2\rceil}p^{\lceil \frac{rs}{2} \rceil}\leq \left(\frac{e\binom{r}{2}}{\lceil rs/2\rceil} p \right)^{\lceil \frac{rs}{2}\rceil}\leq (erp)^{\frac{rs}{2}}$. Hence, by the union bound, the probability that $G$ has a subgraph of average degree at least $s$ on at most $c_s n d^{-\frac{s}{s-2}}$ vertices is at most
		\begin{equation}
			\sum_{r=1}^{\lfloor c_s n d^{-\frac{s}{s-2}}\rfloor} \binom{n}{r}(erp)^{\frac{rs}{2}}\leq \sum_{r=1}^{\lfloor c_s n d^{-\frac{s}{s-2}}\rfloor} \left(\frac{en}{r}\cdot(erp)^{\frac{s}{2}}\right)^r= \sum_{r=1}^{\lfloor c_s n d^{-\frac{s}{s-2}}\rfloor} (e^{\frac{s}{2}+1}r^{\frac{s-2}{2}}np^{\frac{s}{2}})^r. \label{eqn:union bound}
		\end{equation}
		Moreover, for $r\leq \lfloor c_s n d^{-\frac{s}{s-2}}\rfloor$, we have
		$$r^{\frac{s-2}{2}}np^{\frac{s}{2}}\leq c_s^{\frac{s-2}{2}} n^{\frac{s-2}{2}} d^{-\frac{s}{2}} n p^{\frac{s}{2}}\leq c_s^{\frac{s-2}{2}} n^{\frac{s-2}{2}} d^{-\frac{s}{2}} n \left(\frac{4d}{n}\right)^{\frac{s}{2}}=c_s^{\frac{s-2}{2}}4^{\frac{s}{2}}.$$
		Hence, if $c_s$ is sufficiently small, then by (\ref{eqn:union bound}), the probability that $G$ has a subgraph of average degree at least $s$ on at most $c_snd^{-\frac{s}{s-2}}$ vertices is at most $\frac{1}{1000}$. It follows that with positive probability $G$ has no such subgraph but has average degree at least $d$, completing the proof.
	\end{proof}
	
	\begin{proof}[Proof of Proposition \ref{prop:lower_bound2}]
		We show that $\varepsilon= \frac{1}{ts}$ suffices. Assume that $n$ is sufficiently large with respect to $s$ and $t$, and let $G'$ be the graph on $n$ vertices in which each edge is present independently with probability $p=n^{-\frac{2}{s}+\frac{2}{ts}}$. Letting $X$ be the number of edges of $G'$, we have $\mathbb{E}(X)=\binom{n}{2}p>\frac{1}{4}n^{2-\frac{2}{s}+\frac{2}{ts}}$.
		
		Let $\mathcal{R}$ be the family of graphs $R\subset V(G')^{(2)}$ with $r\leq t$ vertices and exactly $\lceil \frac{rs}{2}\rceil$ edges. Say that such an $R$ is \emph{bad} if $R$ is a subgraph of $G'$. Clearly, $\mathbb{P}(R\mbox{ is bad})=p^{\lceil \frac{rs}{2}\rceil}$. 
		Let $Y$ be the number of bad elements of $\mathcal{R}$, then 
		\begin{align*}
			\mathbb{E}(Y)=&\sum_{R\in\mathcal{R}}\mathbb{P}(R\mbox{ is bad})\leq  \sum_{r=1}^{t}\binom{n}{r}\binom{\binom{r}{2}}{\lceil rs/2\rceil}p^{\lceil \frac{rs}{2}\rceil}\\
			<&\sum_{r=1}^{t} n^{r}(erp)^{\frac{rs}{2}}<t\big(e^{\frac{s}{2}} t^{\frac{s}{2}} n p^{\frac{s}{2}}\big)^{t}=t^{\frac{ts}{2}+1}e^{\frac{ts}{2}}n.
		\end{align*}
		Hence, we have $\mathbb{E}(X-Y)>\frac{1}{8}n^{2-\frac{2}{s}+\frac{2}{ts}}>n^{2-\frac{2}{s}+\varepsilon}$. But then there exists a choice for $G'$ such that $X-Y>n^{2-\frac{2}{s}+\varepsilon}$. For each bad $R\in\mathcal{R}$, remove an edge of $G'$ contained in $R$, and let  the resulting graph be $G$. Then $G$ contains no element of $\mathcal{R}$ as a subgraph, so every subgraph of $G$ on at most $t$ vertices has average degree less than $s$.  Furthermore, $G$ has average degree at least $\frac{2(X-Y)}{n}>n^{1-\frac{2}{s}+\varepsilon}$, finishing the proof.
	\end{proof}
	
	Finally, before we embark on the proofs of our main theorems, let us state a useful lemma about subgraphs of large minimum degree.
	
	\begin{lemma}\label{lemma:mindeg}
		Every graph $G$ of average degree $d$ contains a nonempty subgraph of minimum degree at least $\frac{d}{2}$.
	\end{lemma}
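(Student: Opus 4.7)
The plan is to use the standard iterative peeling argument: repeatedly delete vertices of low degree and show that the process cannot consume the entire graph. Concretely, I would define the following procedure. Start with $G_0 = G$, and at step $i \geq 0$, if $G_i$ is non-empty and contains a vertex $v_i$ of degree (in $G_i$) strictly less than $d/2$, set $G_{i+1} = G_i - v_i$; otherwise stop. If the procedure terminates with a non-empty graph $G_k$, then by construction every vertex of $G_k$ has degree at least $d/2$, giving the desired subgraph.

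The key step is to rule out the possibility that the procedure empties the graph. I would track the number of edges removed. At each step the deleted vertex $v_i$ is incident (in $G_i$) to fewer than $d/2$ edges, so fewer than $d/2$ edges are removed. If the procedure were to terminate with the empty graph after $n = |V(G)|$ steps, then the total number of edges removed would be strictly less than $n \cdot d/2$. On the other hand, since $G$ has average degree at least $d$, it has at least $nd/2$ edges, and all of them must have been removed — a contradiction. Hence the procedure halts at some non-empty $G_k$, which has minimum degree at least $d/2$.

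There is essentially no obstacle here; the only minor subtlety is that the hypothesis is on the average degree (so $|E(G)| \geq nd/2$) rather than the minimum degree, which is exactly what makes the edge-counting contradiction go through. I would present the argument in two or three lines, noting that the same proof works verbatim with $d/2$ replaced by any threshold strictly less than the average degree, and in particular yields the widely used fact that a graph of average degree $d$ contains a subgraph of minimum degree more than $d/2$ if one takes a strict inequality throughout.
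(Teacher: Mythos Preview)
Your proposal is correct and is essentially identical to the paper's proof: the paper also repeatedly removes vertices of degree less than $d/2$, observes that fewer than $\frac{|V(G)|d}{2}=|E(G)|$ edges are removed in total, and concludes that the remaining graph is nonempty with minimum degree at least $d/2$.
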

	
	\begin{proof}
		Keep removing vertices of degree less than $\frac{d}{2}$ as long as there is such a vertex. In total, we removed less than $\frac{|V(G)|d}{2}=|E(G)|$ edges, so the resulting graph is nonempty and has minimum degree at least $\frac{d}{2}$.
	\end{proof}
	
	\subsection{Balanced trees}
	In this section, we define \emph{balanced trees}, which one can view as the building blocks of our small subgraph of large average degree. Interestingly, but perhaps not unexpectedly, these trees coincide with the balanced trees constructed by Bukh and Conlon \cite{BC18} in their celebrated paper on the Rational Exponents conjecture.
	
	\begin{definition}
		Let $T$ be a tree with leaf set $L$. For any non-empty $S\subset V(T)\setminus L$, let $$\rho_T(S):=\frac{e_S}{|S|},$$ where $e_S$ is the number of edges in $T$ incident to at least one vertex from $S$. Also, set $\rho_T=\rho_T(V(T)\setminus L)$. We say that $T$ is \emph{balanced} if $\rho_T\leq \rho_T(S)$ holds for every non-empty $S\subset V(T)\setminus L$.
	\end{definition}
	
	The main reason balanced trees are useful for us is given by the following simple lemma which one can prove by induction.
	
	\begin{lemma}[Bukh--Conlon \cite{BC18}] \label{lem:union of balanced trees}
		Let $T$ be a balanced tree with $q$ leaves and let $H$ be a graph which is the union of copies of $T$ with the same set of leaves. Then $e(H)\geq (|V(H)|-q)\rho_T$.
	\end{lemma}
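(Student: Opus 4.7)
The plan is to induct on the number of copies of $T$ making up $H$. Write $L$ for the common $q$-element leaf set and $S = V(H) \setminus L$ for the internal vertices that appear in $H$, so that the claim becomes $e(H) \geq |S|\,\rho_T$.

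The base case (a single copy) is immediate: then $S = V(T) \setminus L$ and $e(H) = e_T = |S|\,\rho_T$ by the definition of $\rho_T$. For the inductive step, I add one further copy $T_{k+1}$ with leaf set $L$ to a union $H$ of $k$ copies, forming $H'$, and set $S_{\text{new}} = (V(T_{k+1}) \setminus L) \setminus S$, the internal vertices of $T_{k+1}$ not already in $H$. Since $S_{\text{new}}$ is disjoint from $V(H)$, every edge of $T_{k+1}$ with at least one endpoint in $S_{\text{new}}$ is genuinely new in $H'$; hence
\[ e(H') - e(H) \geq e_{S_{\text{new}}}(T_{k+1}), \]
while on the vertex side $|V(H')| - |V(H)| = |S_{\text{new}}|$.

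If $S_{\text{new}}$ is empty the inductive hypothesis already gives the conclusion, so assume $S_{\text{new}} \neq \emptyset$. Because $T_{k+1} \cong T$ is balanced and $S_{\text{new}}$ is a non-empty subset of its internal vertices, $\rho_{T_{k+1}}(S_{\text{new}}) \geq \rho_T$, so the displayed inequality gives $e(H') - e(H) \geq |S_{\text{new}}|\,\rho_T$. Combined with the inductive hypothesis this yields $e(H') \geq (|V(H')| - q)\,\rho_T$, closing the induction. The only real design choice is to measure growth via $S_{\text{new}}$ rather than via all of $V(T_{k+1}) \setminus L$, which would force us to account for edges already present in $H$ and lose the bound. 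Once the increment is set up this way, the balanced property is tailor-made for the inductive step, and no further obstacle remains.
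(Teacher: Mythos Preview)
Your proof is correct and follows essentially the same inductive argument as the paper: the paper peels off one copy $T_0$ and applies the hypothesis to the remaining $k-1$ copies, while you add a copy $T_{k+1}$ to an existing union of $k$ copies, but in both cases the increment in edges is bounded below via the balanced condition applied to the set of internal vertices of the added/removed copy not already present. One small notational slip: in the base case you write $e(H) = e_T$, which is undefined; the intended quantity is $e(T)$ (or equivalently $e_{V(T)\setminus L}$, since every edge of a tree with a non-leaf vertex has a non-leaf endpoint).
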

	
	\begin{proof}
		Let $H$ be the union of $k$ copies of $T$ with the same set $L$ of leaves. We prove the inequality $e(H)\geq (|V(H)|-q)\rho_T$ by induction on $k$. If $k=1$, then $e(H)=e(T)\geq e_{V(T)\setminus L}=|V(T)\setminus L|\rho_T=(|V(H)|-q)\rho_T$, as desired. Assume now that $k\geq 2$. Let $T_0$ be one of the $k$ copies of $T$ constituting $H$ and let $H'$ be the union of the remaining $k-1$ copies of $T$. By the induction hypothesis, we have $e(H')\geq (|V(H')|-q)\rho_T$. Let $S=V(T_0)\setminus V(H')$. Note that since all copies of $T$ in $H$ have the same set $L$ of leaves, we have $S\subset V(T_0)\setminus L$. Now observe that $e_S\geq |S|\rho_T$ (where $T_0$ is identified with $T$ and $S$ is viewed as a subset of $V(T)$). Indeed, the inequality is trivial if $S=\emptyset$ and else $e_S=|S|\rho_T(S)\geq |S|\rho_T$ since $T$ is balanced. But then $$e(H)\geq e(H')+e_S\geq (|V(H')|-q)\rho_T+|S|\rho_T=(|V(H)|-q)\rho_T,$$ completing the induction step.
	\end{proof}
	
	Next we describe a construction of balanced trees which are caterpillars. A \emph{caterpillar} is a tree in which the non-leaf vertices form a path.  
	
	\begin{definition}[Bukh--Conlon \cite{BC18}]
		Suppose that $a$ and $b$ are positive integers satisfying $a+1 \leq b < 2a+1$ and put $i = b-a$. We define a tree $T_{a,b}$ by taking a path with $a$ vertices, which are labelled in order as $1,2,\dots,a$, and then adding a leaf to each of the $i+1$ vertices
		$$1, \left\lfloor 1+\frac{a}{i}\right\rfloor,\left\lfloor 1+2\cdot \frac{a}{i}\right\rfloor,\dots,\left\lfloor 1+(i-1)\cdot \frac{a}{i}\right\rfloor,a.$$
		(Note that if $b=2a$, then $\lfloor 1+(i-1)\cdot \frac{a}{i}\rfloor=a$. In this case we attach two leaves in total to vertex $a$.)
		For $b \geq 2a+1$, we define $T_{a,b}$ recursively to be the tree obtained by attaching a leaf to each non-leaf of $T_{a,b-a}$.
	\end{definition}
	
	\begin{remark}
		In \cite{BC18}, trees $T_{a,b}$ for $b\in \{a-1,a\}$ are introduced as well and they are used to define $T_{a,b}$ for $b\in \{2a-1,2a\}$, but one can easily see that our definition gives the same graphs.
	\end{remark}
	
	Bukh and Conlon showed that, indeed, $T_{a,b}$ is balanced for every $a<b$. Combined with the simple observation that $T_{a,b}$ has maximum degree at most $\lceil b/a\rceil+1$, we obtain the following result.
	
	\begin{lemma}[Bukh--Conlon \cite{BC18}] \label{lem:exists balanced tree}
		For any positive integers $a<b$, $T_{a,b}$ is a balanced caterpillar with $a$ non-leaf vertices, $b$ edges and maximum degree at most $\lceil b/a\rceil+1$.
	\end{lemma}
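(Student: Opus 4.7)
The plan is to verify the claims separately. The balancedness of $T_{a,b}$ is a theorem of Bukh and Conlon \cite{BC18}, so I would simply cite it; that $T_{a,b}$ is a caterpillar is immediate from the construction since no step creates any non-leaf beyond the initial path of $a$ vertices. The three numerical properties ($a$ non-leaves, $b$ edges, maximum degree at most $\lceil b/a\rceil + 1$) all follow by a short simultaneous induction on $b$, using the recursive definition.

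For the base case $a+1 \leq b < 2a+1$, by construction the non-leaves form a path on $a$ vertices and we attach $i+1 = b-a+1$ leaves (with multiplicity, since vertex $a$ receives two when $b=2a$), giving $(a-1) + (b-a+1) = b$ edges in total. Each non-leaf has path-degree at most $2$ and carries at most one attached leaf, the only exception being the endpoint $a$ when $b=2a$, which carries two. Hence the maximum degree is at most $3$, matching $\lceil b/a\rceil + 1 = 3$ throughout this range.

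For the inductive step $b \geq 2a+1$, the tree $T_{a,b}$ is obtained from $T_{a,b-a}$ by attaching one new leaf to each of its non-leaves. The new leaves are leaves and the old non-leaves remain non-leaves, so the non-leaf count is preserved at $a$ by induction, while the edge count grows by exactly $a$ to give $b$. Each non-leaf gains exactly one unit of degree, so the maximum degree grows by exactly one and is therefore at most $\lceil (b-a)/a\rceil + 1 + 1 = \lceil b/a\rceil + 1$ by the inductive hypothesis, since $\lceil (b-a)/a\rceil = \lceil b/a\rceil - 1$ for positive integers $a$. No part of this induction is delicate; the only genuinely non-routine ingredient in the lemma is the balancedness of $T_{a,b}$ in the base case (the recursive step preserving balancedness is in turn a one-line check, as adding a uniform leaf to every non-leaf increases $\rho_T(S)$ by exactly $1$ for every non-empty $S$), and for that I would defer to the construction of Bukh and Conlon.
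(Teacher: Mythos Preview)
Your proposal is correct and matches the paper's treatment: the paper does not prove this lemma but simply cites Bukh--Conlon for the balancedness and calls the maximum-degree bound a ``simple observation''. You do the same, merely spelling out the routine induction that the paper leaves implicit; the argument you give for the base case (distinctness of the attachment points when $b<2a$, the double leaf at vertex $a$ when $b=2a$) and for the recursive step (each non-leaf gains one edge, and $\lceil (b-a)/a\rceil = \lceil b/a\rceil - 1$) is accurate.
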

	
	\subsection{Counting trees}
	
	In this section, we provide lower and upper bounds on the number of copies of a fixed tree in graphs with some prescribed properties. Let us start with the lower bound.

	For a graph $G$ and a set $S$ of vertices in $G$, we write $\Gamma_G(S)$ for the set of vertices in $G$ which have a neighbour in $S$. We make use of the following celebrated theorem of Friedman and Pippenger \cite{FP87} about large bounded degree trees in expanding graphs.
	
	\begin{theorem}[Friedman--Pippenger \cite{FP87}] \label{thm:FP}
		If $G$ is a non-empty graph such that for every $S\subset V(G)$ with $|S|\leq 2m-2$, we have $|\Gamma_G(S)|\geq (k+1)|S|$, then $G$ contains every tree with at most $m$ vertices and maximum degree at most $k$.
	\end{theorem}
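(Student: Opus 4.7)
The plan is to prove the theorem by induction on the size of a growing embedded subtree of $T$, with a strengthened inductive hypothesis that tracks a system of ``reserves'' at each embedded vertex. The precise strengthening I would use is: for every subtree $T_0 \subseteq T$, there exists an embedding $\phi: V(T_0) \to V(G)$ together with pairwise disjoint sets $\{R_v : v \in V(T_0)\}$ with each $R_v \subseteq N_G(\phi(v))$, all disjoint from $\phi(V(T_0))$, satisfying $|R_v| \geq k+1-\deg_{T_0}(v)$. The reserves serve as a pool of images for future children of embedded vertices: when one extends $T_0$ by a leaf $u$ attached to $v$, one sets $\phi(u)$ to be any element of $R_v$. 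The base case $|V(T_0)|=1$ follows by applying the hypothesis to $S = \{\phi(r)\}$ to get $|N_G(\phi(r))| \geq k+1$, which furnishes the reserve $R_r$.

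For the induction step, I would pick a new leaf $u$ of $T$ attached to some embedded $p \in V(T_0)$ and define $\phi(u)$ to be any element of $R_p$ (nonempty because $\deg_{T_0}(p) \leq k$). The reserve $R_p$ loses one element, but this is exactly compensated by $\deg(p)$ increasing by one, so the size invariant at $p$ is preserved. The crucial task is then to re-establish the full reserve system for $T_0' := T_0 \cup \{u\}$: a new reserve $R_u$ of size at least $k$ must be constructed at $\phi(u)$, disjoint from $\phi(V(T_0'))$ and from all other reserves, while possibly reallocating vertices among the existing reserves in order to preserve their required sizes and mutual disjointness.

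The main obstacle, which is the technical heart of the Friedman--Pippenger argument, is this regeneration of reserves. A direct application of the expansion hypothesis to the singleton $\{\phi(u)\}$ only yields $|N_G(\phi(u))| \geq k+1$, which could in principle lie entirely within the already used set $\phi(V(T_0')) \cup \bigcup_v R_v$. The fix is to apply the hypothesis at a more global level, to a set $S$ whose size is comparable to $|V(T_0')| + \sum_v |R_v|$; this size is controlled by a small constant multiple of $m$ and must fit inside the range $|S| \leq 2m-2$ of the hypothesis, which is precisely why the statement is quantified up to $2m-2$ rather than $m$. One clean way to formalize this is a Hall-type matching argument on the bipartite graph between $V(T_0')$ and $V(G) \setminus \phi(V(T_0'))$ with $v$ adjacent to $w$ iff $w \in N_G(\phi(v))$: the existence of a valid reserve system is equivalent to a defect Hall condition with demands $k+1-\deg_{T_0'}(v)$, and this condition is verified by applying the expansion hypothesis to the image $\phi(S')$ of each tested subset $S' \subseteq V(T_0')$, enlarged by a suitable buffer whose role is to absorb the discrepancy between $|\Gamma_G(\phi(S'))|$ and the part of it lying outside $\phi(V(T_0'))$. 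Once the invariant is re-established, the induction closes, and after $|V(T)|$ steps one obtains the desired embedding of $T$ into $G$.
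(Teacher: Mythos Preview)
The paper does not give its own proof of this theorem; it is quoted from Friedman and Pippenger \cite{FP87} and used as a black box. So there is nothing to compare against in the paper itself, and the relevant question is whether your sketch stands on its own.

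The inductive scheme you describe is indeed the shape of the Friedman--Pippenger argument, but the invariant you propose (pairwise disjoint reserves $R_v\subseteq N_G(\phi(v))$ of size $k+1-\deg_{T_0}(v)$) is strictly weaker than the invariant they maintain, and the way you propose to restore it has a real gap. You write that one may take $\phi(u)$ to be \emph{any} element of $R_p$ and then rebuild the reserve system via a defect Hall condition, verified by applying the expansion hypothesis to $\phi(S')$ ``enlarged by a suitable buffer''. Consider the Hall test for the singleton $S'=\{u\}$: one needs $|N_G(\phi(u))\setminus \phi(V(T_0'))|\ge k$, yet the expansion hypothesis applied to $\{\phi(u)\}$ (or to any superset) only controls $|\Gamma_G(\cdot)|$ globally and gives no lower bound on the neighbours of $\phi(u)$ lying \emph{outside} the already embedded set. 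In particular, nothing in your invariant prevents every vertex of $R_p$ from having all its $G$-neighbours inside $\phi(V(T_0))\cup\bigcup_v R_v$, in which case no choice of $\phi(u)\in R_p$ admits a new reserve $R_u'$ of size $k$, and the induction stalls. A ``buffer'' enlarging $\phi(S')$ cannot repair this, because enlarging the set only enlarges $\Gamma_G$ of the union, not of $\phi(u)$ alone.

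What Friedman and Pippenger actually maintain is a stronger, set-wise invariant: for \emph{every} $X\subseteq V(G)$ with $|X|\le 2m-2$ one has $|\Gamma_G(X)|\ge |X|+\sum_{v:\,\phi(v)\in X}(k-\deg_{T_0}(v))$. This forces the image to sit ``spread out'' in $G$, not merely to carry explicit reserves. The heart of their proof is showing that if the current embedding satisfies this for all $X$, then \emph{some} (not any) choice of $\phi(u)$ among the neighbours of $\phi(p)$ preserves it; the choice is made via a maximality/averaging argument over the candidate images, and this is precisely where the bound $2m-2$ and the factor $k+1$ in the hypothesis are used. Your outline identifies the right difficulty but does not supply the mechanism that overcomes it.
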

	
	Say that a graph $G$ is \emph{$(\rho,r)$-sparse} if for every $R\subset V(G)$ of size at most $r$, the number of edges in $G[R]$ is at most $\rho|R|$. Next, we show that $(\rho,r)$-sparse graphs of large minimum degree have good expansion properties.
	
	\begin{lemma} \label{lem:find one tree}
		Let $G$ be a $(\rho,r)$-sparse graph with average degree at least $4\rho (k+2)$. Then $G$ contains every tree with at most $\frac{r}{2(k+2)}$ vertices and maximum degree at most $k$.
	\end{lemma}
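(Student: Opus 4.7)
The plan is to apply the Friedman--Pippenger theorem (Theorem~\ref{thm:FP}), but first reduce to a subgraph with good enough expansion. Set $m=\lfloor r/(2(k+2))\rfloor$, so it suffices to show that some subgraph $G'$ satisfies $|\Gamma_{G'}(S)|\geq (k+1)|S|$ for every $S\subset V(G')$ with $|S|\leq 2m-2$.

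First, I would use Lemma~\ref{lemma:mindeg} on $G$ (which has average degree at least $4\rho(k+2)$) to pass to a non-empty subgraph $G'\subseteq G$ with minimum degree at least $2\rho(k+2)$. Note that $G'$ inherits $(\rho,r)$-sparsity since any subset of $V(G')$ is also a subset of $V(G)$.

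Next I would verify the expansion hypothesis by contradiction. Fix $S\subset V(G')$ with $|S|\leq 2m-2$ and suppose $|\Gamma_{G'}(S)|<(k+1)|S|$. Put $R=S\cup\Gamma_{G'}(S)$; then
$$|R|\leq |S|+|\Gamma_{G'}(S)|<(k+2)|S|\leq (k+2)(2m-2)<2(k+2)m\leq r,$$
so sparsity gives $e(G'[R])\leq \rho|R|<\rho(k+2)|S|$. On the other hand, every edge incident to $S$ lies inside $G'[R]$, and summing degrees over $S$ shows that the number of such edges is at least $\tfrac{1}{2}\sum_{v\in S}\deg_{G'}(v)\geq |S|\rho(k+2)$ (since $2e(S)+e(S,V(G')\setminus S)\geq 2|S|\rho(k+2)$ while the edges counted in $G'[R]$ number $e(S)+e(S,V(G')\setminus S)\geq \tfrac{1}{2}(2e(S)+e(S,V(G')\setminus S))$). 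This contradicts the upper bound, so the expansion condition $|\Gamma_{G'}(S)|\geq (k+1)|S|$ must hold.

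Finally, I would invoke Theorem~\ref{thm:FP} on $G'$ to conclude that $G'\subseteq G$ contains every tree with at most $m=\lfloor r/(2(k+2))\rfloor$ vertices and maximum degree at most $k$. The only nontrivial step is the expansion verification; the subtlety there is the simultaneous use of the minimum degree (producing many edges out of $S$) and sparsity (bounding edges inside $S\cup\Gamma(S)$), with the factor $k+2$ calibrated precisely so that the assumed failure of expansion keeps $|R|$ within the sparsity range.
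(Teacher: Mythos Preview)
Your proposal is correct and follows essentially the same route as the paper: pass to a subgraph of minimum degree at least $2\rho(k+2)$ via Lemma~\ref{lemma:mindeg}, then verify the Friedman--Pippenger expansion hypothesis by showing that a failure would force $R=S\cup\Gamma_{G'}(S)$ to have $|R|<r$ yet $e(G'[R])\geq \rho(k+2)|S|>\rho|R|$, contradicting $(\rho,r)$-sparsity. The only cosmetic difference is that you spell out the edge-counting inequality $e(G'[R])\geq \tfrac12\sum_{v\in S}\deg_{G'}(v)$ explicitly, whereas the paper states it in one line.
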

	
	\begin{proof}
		By Lemma \ref{lemma:mindeg}, $G$ contains a subgraph $G'$ of minimum degree at least $2\rho (k+2)$. Note that $G'$ is also $(\rho,r)$-sparse. We show that $G'$ already contains every tree with at most $m=\frac{r}{2(k+2)}$ vertices and maximum degree at most $k$. Otherwise, by Theorem \ref{thm:FP}, there is a set $S\subset V(G')$ of size at most $2m-2\leq \frac{r}{k+2}$ such that $|\Gamma_{G'}(S)|<(k+1)|S|$. Let $R=S\cup \Gamma_{G'}(S)$. Then $$|R|\leq |S|+|\Gamma_{G'}(S)|< (k+2)|S|\leq r.$$ Furthermore, by the minimum degree condition, the number of edges in $G'[R]$ is at least $$\frac{1}{2}|S|\cdot 2\rho(k+2)> \rho |R|,$$ which is a contradiction.
	\end{proof}
	
	Now we are ready to state our first tree counting lemma.
	
	\begin{lemma} \label{lem:find many trees}
		For any $\rho>1$ and positive integer $k$, there exists $c_0=c_0(\rho,k)>0$ such that the following holds for every $n\geq 8$. Let $G$ be an $n$-vertex $(\rho,r)$-sparse graph with average degree at least $d\geq c_0^{-1}$. Let $T$ be a tree with $t\leq \frac{r}{2(k+2)}$ vertices and maximum degree at most $k$. Then $G$ contains at least $(c_0 d)^{t-1}$ copies of $T$.
	\end{lemma}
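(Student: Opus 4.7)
My plan is to prove Lemma \ref{lem:find many trees} by a greedy-embedding count, following the logic of Lemma \ref{lem:find one tree} but tracking the number of choices at each step of the embedding.

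First, by Lemma \ref{lemma:mindeg} I would pass to a subgraph $G' \subseteq G$ of minimum degree at least $d/2$; this subgraph is still $(\rho,r)$-sparse. Taking $c_0 = c_0(\rho,k)$ small enough that $d \geq c_0^{-1}$ forces $d/2 \geq 2\rho(k+2)$, one derives from the $(\rho,r)$-sparsity (exactly as in the proof of Lemma \ref{lem:find one tree}) the much stronger expansion bound $|\Gamma_{G'}(S)| \geq \tfrac{d}{4\rho}|S|$ for every $S \subseteq V(G')$ with $|S| \leq r/(k+2)$. For $c_0$ sufficiently small, this is far stronger than the $(k+1)|S|$ needed by Friedman--Pippenger (Theorem \ref{thm:FP}).

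Next, I would order $V(T)$ as $v_1, v_2, \ldots, v_t$ in breadth-first order from some root, so that each $v_i$ with $i \geq 2$ has a unique ``parent'' $v_{p(i)} \in \{v_1, \ldots, v_{i-1}\}$. I would count labeled embeddings $\phi: V(T) \to V(G')$ by choosing $\phi(v_1), \phi(v_2), \ldots, \phi(v_t)$ in this order, requiring at each step $i \geq 2$ that $\phi(v_i) \in N_{G'}(\phi(v_{p(i)})) \setminus \{\phi(v_1), \ldots, \phi(v_{i-1})\}$. If one can guarantee at least $c_0 d$ choices at each such step, multiplying across $i = 2,\ldots,t$ yields the desired lower bound of $(c_0 d)^{t-1}$ labeled embeddings (dividing by $|\mathrm{Aut}(T)|$ to pass from labeled to unlabeled copies is absorbed by shrinking $c_0$, since $|\mathrm{Aut}(T)|$ is bounded in terms of $k$ and $t$).

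The heart of the proof is establishing this per-step bound. When $t$ is small relative to $d$ (say $t \leq d/(C\rho k)$ for a suitable $C$), the bound follows directly from the minimum degree condition, since the defect $|N_{G'}(\phi(v_{p(i)})) \cap \{\phi(v_1), \ldots, \phi(v_{i-1})\}|$ is trivially at most $i-1 \leq t$, leaving at least $d/2 - t \geq c_0 d$ valid extensions. When $t$ is larger, the minimum degree alone is insufficient, and here I would adapt the proof of Theorem \ref{thm:FP} into a quantitative form: maintaining, for each already-embedded vertex $v$, a reserve set $M_v \subseteq N_{G'}(v)$ of ``available'' neighbours, and using the strong expansion derived above to show that $|M_{\phi(v_{p(i)})}| \geq c_0 d$ is sustained throughout. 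The main obstacle is exactly this quantitative Friedman--Pippenger invariant: each new embedding step removes the chosen vertex from every reserve containing it, and one must choose the reserves so that this erosion is compensated by the expansion. Making $c_0$ small enough forces the expansion to be comfortably stronger than the $(k+1)|S|$ threshold of Theorem \ref{thm:FP}, and it is this extra slack that allows the invariant to be maintained all the way to step $t$.
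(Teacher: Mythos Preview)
Your approach is quite different from the paper's, and the paper's is much shorter. The paper does not embed $T$ greedily at all: it samples each edge of $G$ independently with probability $p=\frac{8\rho(k+2)}{d}$, notes that with probability at least $\tfrac12$ the sampled subgraph still has average degree $\geq 4\rho(k+2)$ and hence (by Lemma~\ref{lem:find one tree}) contains at least one copy of $T$, and then compares this with the trivial identity $\mathbb{E}[\#\text{copies of }T\text{ in the sample}]=p^{t-1}N$ to deduce $N\geq \tfrac12 p^{-(t-1)}\geq (c_0d)^{t-1}$ with $c_0=\frac{1}{16\rho(k+2)}$. This random-sparsification trick converts the \emph{existence} statement of Lemma~\ref{lem:find one tree} directly into the desired \emph{counting} statement, with no further embedding analysis.

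Your plan, by contrast, hinges on a quantitative Friedman--Pippenger step that you do not carry out, and there is a real obstacle where you claim the expansion $|\Gamma_{G'}(S)|\geq \frac{d}{4\rho}|S|$ for all $|S|\leq r/(k+2)$ ``exactly as in the proof of Lemma~\ref{lem:find one tree}.'' That proof argues by contradiction: if $|\Gamma_{G'}(S)|<(k+1)|S|$ then $|S\cup\Gamma_{G'}(S)|<(k+2)|S|\leq r$, so sparsity applies. To get expansion by $\frac{d}{4\rho}$ the same way you would need $|S\cup\Gamma_{G'}(S)|\leq r$, but once $|S|$ exceeds roughly $\frac{4\rho r}{d}$ this can fail, and then you only recover $|\Gamma_{G'}(S)|>r-|S|\geq (k+1)|S|$, not the factor-$d$ expansion. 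Since the large-$t$ regime of your argument needs strong expansion for sets of size up to $2t-2$, which can be of order $r/(k+2)\gg 4\rho r/d$, the invariant you propose to maintain is not supported by the expansion you actually have. A quantitative extendability argument of the type you sketch can be pushed through, but it requires mixing the two expansion regimes carefully and is substantially more work than the one-paragraph sampling proof in the paper. (Your remark on automorphisms is fine once sharpened: $|\mathrm{Aut}(T)|\leq C_k^{\,t-1}$ for trees of maximum degree $\leq k$, so the loss can be absorbed into $c_0$; but note that ``bounded in terms of $k$ and $t$'' alone would not suffice, since $c_0$ must be independent of $t$.)
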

	
	\begin{proof}
		We show that $c_0=\frac{1}{16\rho(k+2)}$ suffices. Let $p=\frac{8\rho(k+2)}{d}<1$, and sample each edge of $G$ independently with probability $p$. Let the resulting graph be $G'$, and let $X=e(G')$. Then $\mathbb{E}(X)=pe(G)\geq \frac{pdn}{2}=4\rho (k+2)n$. As $X$ is the sum of independent indicator random variables, we can use Lemma \ref{lemma:chernoff} to write
		$\mathbb{P}\left(X\leq \frac{1}{2}\mathbb{E}(X)\right)\leq e^{-\frac{\mathbb{E}(X)}{8}}<\frac{1}{2}.$
		Hence, with probability at least $\frac{1}{2}$, $G'$ has average degree at least $4\rho(k+2)$. If this happens, we can apply Lemma \ref{lem:find one tree} to conclude that $G'$ contains a copy of $T$. Thus, the expected number of copies of $T$ in $G'$ is at least $\frac{1}{2}$. On the other hand, writing $N$ for the number of copies of $T$ in $G$, we also have that the expected number of copies of $T$ in $G'$ is $p^{t-1}N$. Hence, we get the inequality $p^{t-1}N\geq \frac{1}{2}$, which implies that $G$ contains at least $$N\geq \frac{1}{2}p^{-(t-1)}=\frac{1}{2}\left(\frac{d}{8\rho(k+2)}\right)^{t-1}>(c_0d)^{t-1}$$ copies of $T$.
	\end{proof}
	
	Now let us turn to our upper bound on the number of copies of a tree. For simplicity, we only present a counting result in case the tree is a caterpillar. However, it seems likely that a similar result should hold for trees in general as well.
	
	\begin{lemma}\label{lemma:num_trees_upperbound}
		Let $G$ be a graph with $n$ vertices and $m$ edges. Let $T$ be a caterpillar with $a$ non-leaf vertices and maximum degree $k$. Then $G$ contains at most $n\cdot (\frac{2m}{a})^{ak}$ copies of $T$.
	\end{lemma}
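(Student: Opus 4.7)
My plan is to upper-bound the number of copies of $T$ in $G$ by the number of injective homomorphisms $\phi : V(T) \to V(G)$, which I count by first specifying the image of the spine and then placing the leaves. Writing $v_1, \ldots, v_a$ for the spine of the caterpillar and $\ell_i$ for the number of leaves attached to $v_i$ (so that each $\ell_i \le k$), any such $\phi$ sends the spine to an injective walk $w_1\cdots w_a$ in $G$ and each leaf attached to $v_i$ to a neighbour of $w_i$. Counting leaf placements with repetition gives the basic inequality
\[
N \;\le\; \sum_{\text{paths } w_1 \cdots w_a \text{ in } G}\,\prod_{i=1}^{a} d_G(w_i)^{\ell_i},
\]
where a ``path'' means an injective walk of length $a-1$ in $G$.

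The crucial step is to exploit the injectivity of the spine. Since $w_1,\dots,w_a$ are distinct, $\sum_{i=1}^{a} d_G(w_i) \le \sum_{v\in V(G)} d_G(v) = 2m$, so by the AM--GM inequality (together with $\ell_i \le k$),
\[
\prod_{i=1}^{a} d_G(w_i)^{\ell_i} \;\le\; \prod_{i=1}^{a} d_G(w_i)^{k} \;\le\; \Bigl(\frac{\sum_i d_G(w_i)}{a}\Bigr)^{ak} \;\le\; \Bigl(\frac{2m}{a}\Bigr)^{ak}
\]
for every path in $G$. This already bounds the per-path contribution by the per-copy target quantity $(2m/a)^{ak}$.

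The remaining task is to recover the leading factor of $n$, that is, to prevent the naive bound (number of paths)$\cdot(2m/a)^{ak}$ from exceeding the target by a factor of $n^{a-1}$. I would do this by organising the sum according to the first spine image $\phi(v_1) = w_1 \in V(G)$: the $n$ choices of $w_1$ supply the factor of $n$, and one must show that for each such $w_1$ the inner sum $\sum_{\text{paths from } w_1} \prod_i d_G(w_i)^{\ell_i}$ is still at most $(2m/a)^{ak}$. The room for this comes from the slack $ak - \sum_i \ell_i \ge 2(a-1)$ in the AM--GM estimate (since $\ell_i \le k - d_i^{\mathrm{spine}}$ and the spine carries $2(a-1)$ incidences), which gives just enough budget to absorb the $a-1$ spine-extension choices. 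The main technical hurdle will be to carry out this amortization cleanly, likely by induction on $a$: at each step one reduces to the sub-caterpillar on spine $v_2,\dots,v_a$, whose corresponding target is $(2m/(a-1))^{(a-1)k}$, and verifies that the extension loss in passing from the sub-caterpillar back to $T$ is absorbed by the gain from replacing $(2m/(a-1))^{(a-1)k}$ by $(2m/a)^{ak}$.
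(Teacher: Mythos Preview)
Your overall plan---embed the spine as an $a$-vertex path, then place the leaves, and finish with AM--GM---matches the paper's, and the inner-sum inequality you are aiming for,
\[
\sum_{\substack{\text{paths } w_1\cdots w_a \\ \text{starting at } w_1}} \prod_{i=1}^a d_G(w_i)^{\ell_i} \;\le\; \Bigl(\frac{2m}{a}\Bigr)^{ak}
\]
for each fixed $w_1$, is exactly what the paper ends up proving. The gap is in how you propose to prove it. The induction on $a$ you sketch does not close: peeling off $v_1$, applying the inductive bound $\bigl(2m/(a-1)\bigr)^{(a-1)k}$ to each neighbour $w_2$ of $w_1$, and then summing over the $d_G(w_1)$ choices of $w_2$ and multiplying by $d_G(w_1)^{\ell_1}$ costs a factor of at most $d_G(w_1)^{\ell_1+1}\le d_G(w_1)^k$. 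The ``gain'' in the target is
\[
\frac{(2m/a)^{ak}}{(2m/(a-1))^{(a-1)k}} \;=\; \Bigl(2m\cdot\frac{(a-1)^{a-1}}{a^a}\Bigr)^{k},
\]
so the step goes through only when $d_G(w_1)\le 2m\,(a-1)^{a-1}/a^a$, which is roughly $2m/(ea)$. This fails whenever $w_1$ has high degree; already for $a=2$ the centre of a star component has degree $m>m/2$. The slack $ak-\sum_i\ell_i\ge 2(a-1)$ you identified is real, but the induction spends it too crudely: it applies the worst-case bound uniformly to every $w_2$, discarding the fact that most neighbours of a high-degree $w_1$ must themselves have low degree.

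The idea you are missing, and the one the paper uses, is to pass to the ordered degree sequence $d_1\ge d_2\ge\cdots\ge d_n$. A short induction on $a$ (deleting the starting vertex) shows that the number of $a$-vertex paths from any fixed vertex is at most $d_1 d_2\cdots d_{a-1}$. Separately, for any fixed spine embedding $w_1,\dots,w_a$ the number of leaf placements is at most $\prod_i d_G(w_i)^{k-1}\le (d_1\cdots d_a)^{k-1}$, since the $w_i$ are $a$ distinct vertices. Multiplying gives at most $(d_1\cdots d_a)^k$ per starting vertex, and AM--GM applied to the $a$ largest degrees (whose sum is at most $2m$) yields $(2m/a)^{ak}$, with the factor $n$ coming from the choice of $w_1$.
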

	
	\begin{proof}
		Let $d_1\geq d_2\geq\dots\geq d_n$ be the degree sequence of $G$. As $T$ is a caterpillar, its non-leaf vertices form a path on $a$ vertices, so let us first count the number of such paths in $G$.
		
		\begin{claim}
			For every vertex $v\in V(G)$, the number of paths on $a$ vertices in $G$ starting from $v$ is at most $d_1\dots d_{a-1}$.
		\end{claim}
		
		\begin{proof}
			We prove this by induction on $a$. If $a=2$, this is trivial, so let us assume that $a\geq 3$. Let $G'$ be the subgraph of $G$ we get after removing $v$, and let $d_1'\geq \dots\geq d_{n-1}'$ be the degree sequence of $G'$. There are $\deg_{G}(v)$ ways to choose the neighbour of $v$ in the path. If this neighbour is $v'\in V(G')$, we can use our induction hypothesis to conclude that there are at most $d_1'\dots d_{a-2}'$ paths on $a-1$ vertices in $G'$ starting with $v'$. Hence, the number of paths on $a$ vertices in $G$ starting with $v$ is at most $d_{G}(v)\cdot (d_1'\dots d'_{a-2})\leq d_1\dots d_{a-1}$, finishing the proof.
		\end{proof}
		
		Hence, the number of ways to embed the non-leaf vertices of $T$ is at most $n\cdot d_1\dots d_{a-1}$. Suppose that the non-leaf vertices of $T$ are already embedded in $G$, and their images are $v_1,\dots,v_a$. Then the number of ways to choose the leaves of $T$ is at most $$\deg_{G}(v_1)^{k-1}\dots\deg_{G}(v_a)^{k-1}\leq (d_1\dots d_{a})^{k-1}.$$
		Therefore, the total number of copies of $T$ in $G$ is at most $$n\cdot (d_1\dots d_{a})^{k}\leq n\cdot \left(\frac{d_1+\dots+d_{a}}{a}\right)^{ak}\leq  n\cdot \left(\frac{2m}{a}\right)^{ak},$$ where the first inequality is due to the AM-GM inequality. 
	\end{proof}
	
	\subsection{Piecing the trees together}
	
	In this section, we present our main technical lemma, which implies both Theorems \ref{thm:polylog_average_deg} and \ref{thm:bounded_size_average_deg}  after substituting the right parameters. Before we state this lemma, we show that if a graph $G$ contains many copies of a balanced caterpillar with the same set of leaves, then $G$ cannot be $(\rho,r)$-sparse. Recall that if $T$ is a tree with $t$ vertices and $a$ non-leaf vertices, then $\rho_{T}=\frac{t-1}{a}$. 
	
	\begin{lemma}\label{lemma:no_sparse}
		Let $\rho>0$ and let $k$ be a positive integer. Let $T$ be a balanced caterpillar with $t$ vertices, $a$ non-leaf vertices, and maximum degree at most $k$. Assume that $t< r\leq n$ and $\rho\leq (1-\frac{t}{r-t})\rho_T$. 
		Let $G$ be an $n$-vertex graph containing at least $r(\frac{2\rho r}{a})^{ak}$ copies of $T$ with the same set of leaves. Then $G$ is not $(\rho,r)$-sparse.
	\end{lemma}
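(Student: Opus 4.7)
The plan is to argue by contradiction. Assume $G$ is $(\rho,r)$-sparse; I would manufacture, from the given copies of $T$, a vertex set of size at most $r$ whose induced subgraph is too dense. Write $\mathcal{T}=\{T_1,\dots,T_N\}$ for the given collection, where $N\geq r(2\rho r/a)^{ak}$, and let $L$ be the common leaf set, so $|L|=q=t-a$. I would build unions greedily as $H_j:=T_1\cup\cdots\cup T_j$, and use the key structural observation that adding $T_{j+1}$ to $H_j$ contributes at most $a$ new vertices, since all $q$ leaves of $T_{j+1}$ already lie in $L\subseteq V(H_j)$ for $j\geq 1$.

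The argument then splits according to whether $|V(H_N)|$ exceeds $r$. In the first case, I would pick the largest index $i$ with $|V(H_i)|\leq r$, so that $|V(H_i)|>r-a$ by the structural observation, and apply Lemma~\ref{lem:union of balanced trees} to get $e(H_i)\geq(|V(H_i)|-q)\rho_T>(r-t)\rho_T$. A short calculation from the hypothesis $\rho\leq (1-t/(r-t))\rho_T$, combined with the elementary identity $(r-t)^2\geq r(r-2t)$, would give $(r-t)\rho_T\geq \rho r\geq \rho|V(H_i)|$, so $e(H_i)>\rho|V(H_i)|$, contradicting $(\rho,r)$-sparsity applied to the set $V(H_i)$.

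In the other case, $|V(H_N)|\leq r$, applying sparsity directly yields $e(H_N)\leq \rho|V(H_N)|$; combining this with Lemma~\ref{lem:union of balanced trees} and rearranging using the hypothesis bounds $|V(H_N)|\leq q(r-t)/t$. Substituting these bounds on $|V(H_N)|$ and $e(H_N)$ into the caterpillar counting inequality (Lemma~\ref{lemma:num_trees_upperbound}) would show that $H_N$ contains at most $r(2\rho r/a)^{ak}\cdot (q(r-t)/(tr))^{ak+1}$ copies of $T$, which is strictly less than $N$ because $(t-a)(r-t)<tr$; this contradicts the fact that $H_N$ contains all $N$ copies from $\mathcal{T}$.

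The main obstacle will be to see that the slightly unusual factor $1-t/(r-t)$ in the hypothesis, rather than the more natural $1-t/r$, is used sharply in both cases: in the first case it exactly absorbs the up-to-$a$ extra vertices possibly added at the final greedy step, and in the second case it is just strong enough to make $|V(H_N)|$ small enough that the caterpillar counting bound beats $N$. Both reductions ultimately rest on elementary polynomial identities in $t,a,r$ that hold because $a\leq t-1$.
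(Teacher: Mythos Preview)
Your proof is correct and follows essentially the same two-case approach as the paper. The only difference is that the paper handles the case $|V(H_N)|\leq r$ more directly: from sparsity alone one has $|V(H_N)|<r$ and $e(H_N)<\rho r$, so Lemma~\ref{lemma:num_trees_upperbound} immediately gives fewer than $r(2\rho r/a)^{ak}$ copies---no appeal to Lemma~\ref{lem:union of balanced trees} or to the hypothesis on $\rho$ is needed in that branch.
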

	
	\begin{proof}
		Assume, for contradiction that $G$ is $(\rho,r)$-sparse. Let $\cT$ be a collection of at least $r(\frac{2\rho r}{a})^{ak}$ copies of $T$ in $G$ with the same set of leaves. Let $R_0\subset V(G)$ be the set of vertices spanned by the elements of $\cT$. First, observe that we must have $|R_0| \geq r$. Otherwise, as $G$ is $(\rho,r)$ sparse, $G[R_0]$ has at most $m=\rho |R_0|< \rho r$ edges, so by Lemma \ref{lemma:num_trees_upperbound}, it contains less than 
		$r(\frac{2\rho r}{a})^{ak}$ copies of $T$, a contradiction.
		
		Therefore, we can take a subcollection $\cT'\subset \cT$ such that the union of the trees in $\cT'$ spans at least $r-t$ and at most $r$ vertices. Let $R$ be the set of vertices in $G$ spanned by the union of the trees in $\cT'$. By Lemma \ref{lem:union of balanced trees}, $e(G[R])\geq (|R|-q)\rho_T$, where $q$ is the number of leaves in $T$. Hence,
		$$\frac{e(G[R])}{|R|}\geq \frac{|R|-q}{|R|}\rho_T\geq \frac{r-t-q}{r-t}\rho_T\geq \frac{r-2t}{r-t}\rho_T=\left(1-\frac{t}{r-t}\right)\rho_T\geq \rho.$$ Since $|R|\leq r$, this contradicts the assumption that $G$ is $(\rho,r)$-sparse, and the proof is complete.
	\end{proof}
	
	Now we are ready to state the promised main technical lemma.

	\begin{lemma}\label{lem:four conditions}
		Let $\rho>1$ and let $c_0=c_0(\rho,\lceil 2\rho\rceil+1)$ given by Lemma \ref{lem:find many trees}. Let $G$ be an $n$-vertex graph with average degree $d\geq c_0^{-1}$. Assume that there are positive integers $r$, $t$ and $a$ such that the following inequalities are satisfied.
		\begin{enumerate}
			\item $\rho\leq (1-\frac{t}{r-t})\frac{t-1}{a}$, \label{eqn:1}
			\item $\frac{t}{a}\leq 2\rho$, \label{eqn:2}
			\item $2(\lceil 2\rho\rceil+3)t\leq r\leq n$ and \label{eqn:3}
			\item $(c_0 d)^{t-1}\geq \binom{n}{t-a}\cdot r\cdot(\frac{2\rho r}{a})^{3t}$. \label{eqn:4}
		\end{enumerate}
		Then $G$ is not $(\rho,r)$-sparse.
	\end{lemma}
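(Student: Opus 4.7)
The natural approach is by contradiction: assume $G$ is $(\rho,r)$-sparse and derive a violation of one of the conditions.

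First I would choose the balanced caterpillar $T = T_{a,t-1}$ supplied by Lemma~\ref{lem:exists balanced tree}, so that $T$ has $a$ non-leaf vertices and $t$ vertices. By condition~(\ref{eqn:2}) its maximum degree is at most $k := \lceil 2\rho \rceil + 1$, and by condition~(\ref{eqn:3}) we have $t \leq r/(2(k+2))$. Combined with the sparsity assumption and $d \geq c_0^{-1}$, Lemma~\ref{lem:find many trees} then yields at least $(c_0 d)^{t-1}$ copies of $T$ in $G$.

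Each such copy has its $t-a$ leaves sitting in $V(G)$, so there are at most $\binom{n}{t-a}$ possible leaf sets; by the pigeonhole principle, some fixed $(t-a)$-subset $L \subset V(G)$ is the leaf set of at least
$$\frac{(c_0 d)^{t-1}}{\binom{n}{t-a}} \;\geq\; r \cdot \left(\frac{2\rho r}{a}\right)^{3t}$$
copies of $T$, where the inequality is exactly condition~(\ref{eqn:4}).

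To close the argument I would apply Lemma~\ref{lemma:no_sparse} to this common-leaf-set family. Its remaining hypotheses $t < r \leq n$ and $\rho \leq (1 - t/(r-t))\rho_T$ (with $\rho_T = (t-1)/a$) are conditions~(\ref{eqn:3}) and~(\ref{eqn:1}). It only remains to check that the threshold $r(2\rho r/a)^{ak'}$ appearing in Lemma~\ref{lemma:no_sparse}, with $k'$ the actual maximum degree $\lceil (t-1)/a \rceil + 1$ of $T$, is dominated by the quantity above. For this I need $ak' \leq 3t$ and $2\rho r / a \geq 1$. The latter is immediate from $a \leq t$ and condition~(\ref{eqn:3}). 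For the former, condition~(\ref{eqn:1}) gives $a \leq (t-1)/\rho$ (since $(r-2t)/(r-t) \leq 1$), and the elementary estimate $a\lceil (t-1)/a \rceil \leq t + a - 2$ yields $ak' \leq t + 2a - 2 \leq t + 2(t-1)/\rho - 2 < 3t$, using $\rho > 1$. Lemma~\ref{lemma:no_sparse} then concludes that $G$ is not $(\rho,r)$-sparse, contradicting the standing assumption.

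The main difficulty is the book-keeping: matching the true exponent $ak'$ produced by Lemma~\ref{lemma:no_sparse} against the uniform exponent $3t$ fixed in condition~(\ref{eqn:4}). It is precisely here that conditions~(\ref{eqn:1}) and~(\ref{eqn:2}), which together pin $a$ into the interval $[t/(2\rho),(t-1)/\rho]$, do the essential work.
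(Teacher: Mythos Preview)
Your proof is correct and follows essentially the same approach as the paper's: build the balanced caterpillar $T_{a,t-1}$, count copies via Lemma~\ref{lem:find many trees}, pigeonhole on the leaf set, and finish with Lemma~\ref{lemma:no_sparse}. The paper is simply terser in the bookkeeping (it records $ka<3t$ in one line, using $k=\lceil t/a\rceil+1$, whereas you spell out the inequality $a\lceil (t-1)/a\rceil\le t+a-2$); the only small omission in your write-up is that you do not explicitly note $a<t-1$ before invoking Lemma~\ref{lem:exists balanced tree}, but this is immediate from condition~(\ref{eqn:1}) and $\rho>1$, as you use later.
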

	
	\begin{proof}
		Conditions \ref{eqn:1} and \ref{eqn:3} imply that $t-1>a$, so Lemma \ref{lem:exists balanced tree} shows that there is a balanced caterpillar $T$ with $a$ non-leaf vertices, $t-1$ edges and maximum degree at most $k=\lceil t/a\rceil+1\leq \lceil 2\rho\rceil+1$. Note that $ka<3t$. Assume that $G$ is $(\rho,r)$-sparse. Then it follows by Lemma \ref{lem:find many trees} that $G$ contains at least $(c_0d)^{t-1}$ copies of $T$. Since $T$ has $t-a$ leaves and $G$ has 
		$\binom{n}{t-a}$ subsets of size $t-a$, it follows from condition~\ref{eqn:4} and the pigeonhole principle that there is a collection of at least $r(\frac{2\rho r}{a})^{3t}>r\cdot(\frac{2\rho r}{a})^{ka}$ copies of $T$ in $G$ which share the same set of leaves. Note that $T$ has $t-1$ edges and $a$ non-leaf vertices, so $\rho_T=\frac{t-1}{a}$ and condition \ref{eqn:1} gives $\rho\leq (1-\frac{t}{r-t})\rho_T$. Hence, we can apply Lemma \ref{lemma:no_sparse} to conclude that $G$ is not $(\rho,r)$-sparse, a contradiction.
	\end{proof}
	
	\subsection{Completing the proofs}
	
	In this section, we put everything together to conclude the proofs of our main results. First, we prove Theorem \ref{thm:polylog_average_deg} in the following equivalent form.
	
	\begin{theorem}
		For every $\rho>1$, there is a constant $C=C(\rho)$ such that the following holds for every sufficiently large $d$. Let $G$ be an $n$-vertex graph with average degree at least $d$, where $d\leq n^{\frac{\rho-1}{\rho}}$. Then $G$ is not $(\rho,nd^{-\frac{\rho}{\rho-1}}(\log d)^C)$-sparse.
	\end{theorem}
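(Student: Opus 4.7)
The strategy is to apply Lemma~\ref{lem:four conditions} directly with a suitable choice of integer parameters $r, t, a$ and (implicitly) $\rho_T = (t-1)/a$. We fix the target $r = \lceil nd^{-\rho/(\rho-1)}(\log d)^C \rceil$ and search for $t, a$ and the corresponding $\rho_T$ that make all four conditions hold simultaneously.

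Concretely, I would pick a rational $\rho_T = p/q$ with $\rho < \rho_T$ and $\eta := \rho_T - \rho$ a small positive quantity (possibly depending on $\log d$ and $\log n$), then set $t = mp+1$ and $a = mq$ for a positive integer $m$ chosen so that $t$ is a suitable power of $\log d$. This forces $(t-1)/a = \rho_T$ exactly, makes $t/a$ slightly larger than $\rho_T$, which is less than $2\rho$ (giving Condition~\ref{eqn:2}), and reduces the inequality $r \leq n$ in Condition~\ref{eqn:3} to $(\log d)^C \leq d^{\rho/(\rho-1)}$, which holds for large $d$. Condition~\ref{eqn:1} rearranges to $r/t \geq (2\rho_T - \rho)/(\rho_T - \rho) = \Theta(1/\eta)$, and the remaining part of Condition~\ref{eqn:3} requires $r/t$ to exceed a $\rho$-dependent constant; both become binding constraints between $r$, $t$, and $\eta$.

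The substantive work lies in Condition~\ref{eqn:4}, namely $(c_0 d)^{t-1} \geq \binom{n}{t-a} \cdot r \cdot (2\rho r/a)^{3t}$. Taking logarithms and using $\binom{n}{s} \leq (en/s)^s$ with $s = t-a = (t-1)(\rho_T-1)/\rho_T + 1$, one finds that the leading contributions from $(t-1)\log d$ on the left-hand side and from $s \log n + 3t\log r$ on the right-hand side nearly match in the extreme regime $d = n^{(\rho-1)/\rho}$, because $s/t \approx (\rho-1)/\rho \approx \log d/\log n$. The residual discrepancy is of order $\Theta((t-1)\eta \log n)$, which must be absorbed by the positive savings $s\log(s/e) + 3t\log a$ of order $t\log\log d$, minus the loss $O(tC\log\log d)$ coming from the $\log r$ factors. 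Balancing these terms fixes the necessary growth rate of $t$ (as a specific power of $\log d$) and an upper bound on $\eta$ (roughly $\eta \sim \log\log d / \log d$), and shows that the entire inequality holds once $C$ is chosen sufficiently large in terms of $\rho$ alone.

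The main obstacle is precisely this delicate balance in Condition~\ref{eqn:4}: because the leading terms nearly cancel in the worst-case regime, the argument relies on carefully tracking the second-order $\log\log d$ corrections and on choosing $t$, $\eta$, and $C$ jointly. Once this calibration is done, all four conditions of Lemma~\ref{lem:four conditions} hold simultaneously, and the lemma immediately yields that $G$ is not $(\rho, r)$-sparse, completing the proof.
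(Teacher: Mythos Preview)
Your overall strategy---apply Lemma~\ref{lem:four conditions} and tune the parameters $r,t,a$ so that all four conditions hold---is exactly right, and the paper does the same thing. However, your specific calibration has a genuine gap: it only treats the extreme regime $d\approx n^{(\rho-1)/\rho}$, whereas the theorem must hold for every $d\le n^{(\rho-1)/\rho}$.

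The problem shows up in two places. First, you write that ``the loss $O(tC\log\log d)$ comes from the $\log r$ factors'', which presumes $\log r=O(C\log\log d)$. But $r=\lfloor nd^{-\rho/(\rho-1)}(\log d)^C\rfloor$ equals $(\log d)^C$ only at the extreme point; for smaller $d$ it can be a positive power of $n$. Second, and more seriously, you propose to take $t$ as a fixed power of $\log d$. If $d$ is well below $n^{(\rho-1)/\rho}$, then $r$ is polynomial in $n$ while $a\le t$ is only polylogarithmic, so $(2\rho r/a)^{3t}$ alone is at least $r^{3t}\ge n^{ct}$ for some $c>0$, which already dwarfs the left side $(c_0 d)^{t-1}\le n^{t-1}$. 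Condition~\ref{eqn:4} then fails outright.

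The fix the paper uses is to let $t$ scale \emph{linearly} with $r$: it takes $\varepsilon=1/\log d$, $t=\lceil r\varepsilon/8\rceil$, and $a=\lceil (t/\rho)(1-\varepsilon)\rceil$, so that $r/t=\Theta(\log d)$ uniformly and hence $(2\rho r/a)^{3t}\le (c_2/\varepsilon^3)^t$ for a constant $c_2=c_2(\rho)$. Equally important, the paper does not compare $(t-1)\log d$ with $s\log n$ directly; instead it substitutes $n/t\le 9d^{\rho/(\rho-1)}/(\varepsilon f)$ (coming straight from the definition of $r$) inside $\binom{n}{t-a}\le (en/(t-a))^{t-a}$, which converts every occurrence of $n$ into one of $d$ and $f=(\log d)^C$. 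After this substitution the leading $d^t$ terms genuinely match for \emph{all} $d$, the residual overshoot is only a constant-to-the-$t$ (since $d^{\varepsilon}=e$), and a large enough $C$ absorbs it via the negative power of $f$. Your balance-of-$\log n$ heuristic can be salvaged, but only after making $t$ proportional to $r/\log d$ and performing this $n\to d$ substitution; without those changes the argument does not go through outside the extreme regime.
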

	
	Note that this indeed implies Theorem \ref{thm:polylog_average_deg} with $s=2\rho$.	
	
	\begin{proof}
		Let $C$ be sufficiently large with respect to $\rho$, let $d$ be sufficiently large with respect to $\rho$ and $C$, let $f=(\log d)^{C}$ and let $r=\lfloor nd^{-\frac{\rho}{\rho-1}}f\rfloor$. Then $r\leq n$, and by the conditions of the theorem, $r\geq \lfloor (\log d)^{C}\rfloor$. Let $\varepsilon=\frac{1}{\log d}$, $t=\lceil \frac{r\varepsilon}{8}\rceil$ and let $a=\lceil \frac{t}{\rho}(1-\varepsilon)\rceil$. Then $t\geq 20\log d$, assuming that $C$ and $d$ are sufficiently large. It suffices to prove that the four conditions in Lemma \ref{lem:four conditions} are satisfied.
		
		Note that 
		\begin{align*}
			\frac{a\rho}{t-1}&\leq \frac{t}{\rho}\cdot\left(1-\frac{\varepsilon}{2}\right)\cdot \frac{\rho}{t-1}=\frac{t}{t-1}\left(1-\frac{\varepsilon}{2}\right)\\
			&\leq \left(1+\frac{1}{10\log d}\right)\left(1-\frac{\varepsilon}{2}\right)\leq 1-\frac{\varepsilon}{4}\leq 1-\frac{t}{r-t},
		\end{align*}
		where the last inequality follows from  $t=\lceil \frac{r\varepsilon}{8}\rceil$. Hence, we have $\rho\leq (1-\frac{t}{r-t})\frac{t-1}{a}$ and condition \ref{eqn:1} is satisfied.
		
		Conditions \ref{eqn:2} and \ref{eqn:3} are immediate from the definitions of $t$ and $a$. Hence, it remains to verify condition \ref{eqn:4}, that is
		\begin{equation}\label{equ:cond4}
		\left(c_0d\right)^{t-1}\geq \binom{n}{t-a}\cdot r\cdot \left(\frac{2\rho r}{a}\right)^{3t}.
		\end{equation}	
		First, since $a<\frac{t}{\rho}$, note that
			\begin{align*}
				\binom{n}{t-a}\leq \left(\frac{en}{t-a}\right)^{t-a}\leq \left(\frac{en}{t-t/\rho}\right)^{t-a}\leq \left(\frac{e}{1-1/\rho}\right)^t\cdot \left(\frac{n}{t}\right)^{t-a}.
			\end{align*}
			Also,
			\begin{align*}
				\left(\frac{n}{t}\right)^{t-a}
				&\leq
				\left(\frac{8n}{\eps r}\right)^{t-a}\leq
				\left(\frac{9d^{\frac{\rho}{\rho-1}}}{\varepsilon f}\right)^{t-a}\leq \left(\frac{9d^{\frac{\rho}{\rho-1}}}{\varepsilon f}\right)^{t-\frac{t}{\rho}+\frac{t\varepsilon}{\rho}}
				\leq \left(9 d^{1+\frac{\varepsilon}{(\rho-1)}}\cdot(\varepsilon f)^{-1+\frac{1}{\rho}-\frac{\varepsilon}{\rho}}\right)^{t}.
			\end{align*}
			Using that $d^{\varepsilon}=d^{1/\log d}=e$, this implies that
			$$\binom{n}{t-a}\leq \big(c_1d\cdot (\varepsilon f)^{-1+\frac{1-\varepsilon}{\rho}}\big)^{t}$$
			holds for some $c_1=c_1(\rho)>0$. On the other hand, we have $(\frac{2\rho r}{a})^{3t}\leq (\frac{c_2}{\varepsilon^{3}})^{t}$ for some $c_2=c_2(\rho)$. Hence, in order to prove (\ref{equ:cond4}), it suffices to show that
			\begin{equation}
				\label{xeq1}
				(c_0d)^{1-\frac{1}{t}}\geq c_1 c_2\cdot d\cdot r^{\frac{1}{t}}\cdot \varepsilon^{-4+\frac{1-\varepsilon}{\rho}}\cdot  f^{-1+\frac{1-\varepsilon}{\rho}}.
			\end{equation}
			As $t>10\log d$ and $d$ is sufficiently large, we have $(c_0d)^{1-\frac{1}{t}}>\frac{c_0d}{2}$. Also, $t\geq \frac{r}{8\log d}>\log r$, so $r^{\frac{1}{t}}<3$. Finally, recalling that $f=(\log d)^{C}$ and $\varepsilon=\frac{1}{\log d}$, we get that (\ref{xeq1}) holds whenever $C$ and $d$ are sufficiently large in terms of $\rho$. This completes the proof of the theorem.
	\end{proof}

	Finally, we prove the following equivalent version of Theorem \ref{thm:bounded_size_average_deg}.
	
	\begin{theorem} \label{thm:bounded size}
		For every $\rho>1$ and $0<\eps<1$, there is a positive integer $r$ such that the following holds for all sufficiently large $n$. Let $G$ be an $n$-vertex graph with average degree $d\geq n^{1-\frac{1}{\rho}+\eps}$. Then $G$ is not $(\rho,r)$-sparse.
	\end{theorem}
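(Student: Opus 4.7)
The plan is to apply Lemma \ref{lem:four conditions} with parameters $r, t, a$ depending only on $\rho$ and $\eps$ (independent of $n$). The hypothesis $d \geq n^{1-1/\rho + \eps}$ provides an $\eps$-size slack in the exponent over the critical threshold of Theorem \ref{thm:polylog_average_deg}, and this slack will drive condition~(\ref{eqn:4}).

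I would fix a sufficiently large integer $t = t(\rho,\eps)$ and set
\[
a = \left\lceil t\Bigl(\tfrac{1}{\rho} - \tfrac{\eps}{2}\Bigr)\right\rceil, \qquad r = Kt,
\]
where $K = K(\rho,\eps)$ is a sufficiently large constant, chosen so that $r \geq 2(\lceil 2\rho\rceil + 3)t$ (ensuring condition~(\ref{eqn:3})) and $r \geq t(1 + 10/(\rho\eps))$ (which will ensure condition~(\ref{eqn:1})). For $n$ large one automatically has $r \leq n$. Conditions~(\ref{eqn:2}) and~(\ref{eqn:3}) are immediate from these choices. For condition~(\ref{eqn:1}), I would bound $\rho a/(t-1) \leq \rho(1/\rho - \eps/2 + 1/t) \cdot t/(t-1)$, which is at most $1 - \rho\eps/2 + O(1/t)$, and this is less than $1 - t/(r-t)$ once $t$ and $K$ are sufficiently large.

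The heart of the proof is condition~(\ref{eqn:4}). Using $\binom{n}{t-a} \leq (en)^{t-a}$ and noting that $r$ and $(2\rho r/a)^{3t}$ contribute only constants depending on $\rho, \eps, t$, it suffices to show that the power of $n$ on the left-hand side exceeds that on the right by a positive exponent. Substituting $d \geq n^{1-1/\rho+\eps}$ and the chosen value of $a$, the difference of exponents is at least
\[
(1-\tfrac{1}{\rho}+\eps)(t-1) - (t-a) \;\geq\; \tfrac{t\eps}{2} + \tfrac{1}{\rho} - \eps - 1 \;\geq\; \tfrac{t\eps}{4}
\]
for $t$ large in terms of $\rho$ and $\eps$. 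Thus the LHS of~(\ref{eqn:4}) exceeds the RHS by a factor of at least $n^{t\eps/4}$, which dominates the constant factors once $n$ is sufficiently large, so Lemma \ref{lem:four conditions} applies and yields the conclusion.

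The main tension is between conditions~(\ref{eqn:1}) and~(\ref{eqn:4}): the former forces $a/t < 1/\rho$ (so that $(t-1)/a > \rho$), while the latter requires $a/t > 1/\rho - \eps$ (so that the $n$-exponent on the LHS beats that on the RHS). These constraints are compatible precisely because $\eps > 0$, and the choice $a/t \approx 1/\rho - \eps/2$ sits safely in the middle. Once the parameters are fixed, the remaining work reduces to routine algebraic inequalities.
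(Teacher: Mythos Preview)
Your approach is essentially the paper's: apply Lemma~\ref{lem:four conditions} with $t,a,r$ depending only on $\rho,\eps$ and verify the four conditions, with the key computation being that the hypothesis $d\geq n^{1-1/\rho+\eps}$ makes the exponent of $n$ on the left of condition~(\ref{eqn:4}) exceed that on the right by a positive amount. The paper takes $a=\lfloor t/\rho\rfloor-1$ rather than your $a=\lceil t(1/\rho-\eps/2)\rceil$, but both choices sit in the admissible window $1/\rho-\eps<a/t<1/\rho$ that you correctly identify.

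One small caveat: your choice of $a$ tacitly assumes $\eps<2/\rho$ (otherwise $1/\rho-\eps/2\leq 0$ and $a$ is not a positive integer), and your claim that condition~(\ref{eqn:2}) is ``immediate'' actually requires $\eps\leq 1/\rho$ so that $t/a\leq 1/(1/\rho-\eps/2)\leq 2\rho$. This is harmless, since for $\eps\geq 1/\rho$ the hypothesis $d\geq n^{1-1/\rho+\eps}\geq n$ is vacuous for large $n$ (or one simply replaces $\eps$ by $\min(\eps,1/\rho)$), but you should say so explicitly.
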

	
	\begin{proof}
		Assume that $r$ is sufficiently large in terms of $\rho$ and $\eps$. Let $t=\lceil \frac{4\rho}{\eps} \rceil$ and let $a=\lfloor \frac{t}{\rho}\rfloor -1$. It suffices to prove that the four conditions in Lemma \ref{lem:four conditions} are satisfied.
		
		Note that $$\frac{a\rho}{t-1}\leq \frac{(t/\rho-1)\rho}{t-1}=\frac{t-\rho}{t-1}=1-\frac{\rho-1}{t-1}\leq 1-\frac{t}{r-t},$$ where the last inequality follows from the assumption that $r$ is sufficiently large in terms of $\rho$ and $\eps$. Hence, we have $\rho\leq (1-\frac{t}{r-t})\frac{t-1}{a}$ and condition \ref{eqn:1} is satisfied.
		
		Condition \ref{eqn:3} is immediate from the definition. Also by definitions of $a$ and $t$, we have $\frac{t}{\rho} \geq 4$ and hence $a \geq \frac{t}{\rho}-2 \geq \frac{t}{2\rho}$, verifying condition \ref{eqn:2}.
		
		Now let us verify condition \ref{eqn:4}. We have $d\geq n^{1-\frac{1}{\rho}+\eps}$ and $\binom{n}{t-a}\leq n^{t-a}$. Hence, it is enough to prove that $$(c_0n^{1-\frac{1}{\rho}+\eps})^{t-1}\geq n^{t-a}\cdot r\cdot \left(\frac{2\rho r}{a}\right)^{3t}.$$ Note that $n$ is sufficiently large compared to the other parameters, so it suffices to prove the corresponding inequality for the exponents of $n$ on both sides, i.e., that  $(1-\frac{1}{\rho}+\eps)(t-1)>t-a$. By the definition of $t$, $\eps t \geq 4\rho>4$. Hence
		$$\left(1-\frac{1}{\rho}+\eps\right)(t-1)=(t-1)-\frac{t-1}{\rho}+\eps (t-1)\geq t-\frac{t}{\rho}+\eps t-2\geq t-a+\eps t-4>t-a,$$ and the proof is complete.
	\end{proof}

	\section{Concluding remarks}
	
	\subsection{Regular subgraphs}
	
	In this paper, we proved optimal bounds on the order of the smallest subgraph $G[R]$ of average degree at least $s$ in a graph $G$ of given order and average degree. As mentioned in the introduction, in case $s$ is an integer, we believe that the strengthening of our results in which the average degree of $G[R]$ is replaced with its minimum degree should also hold. Moreover, one can further strengthen this by requiring that $G[R]$ contains an $s$-regular subgraph.
	
	\begin{conjecture} \label{conj1}
		For every integer $s\geq 3$, there is a constant $C=C(s)$ such that the following holds for every sufficiently large $n$. Let $G$ be an $n$-vertex graph with average degree at least $d$, where $C\log \log n\leq d\leq n^{\frac{s-2}{s}}$. Then $G$ contains an $s$-regular subgraph on at most $nd^{-\frac{s}{s-2}}(\log n)^C$ vertices.
	\end{conjecture}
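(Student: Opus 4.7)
The natural strategy is a two-stage approach: first, use the methods of Theorem \ref{thm:polylog_average_deg} to locate a small subgraph $H \subseteq G$ of large average degree, and then extract an $s$-regular subgraph from $H$ by invoking a quantitative theorem about regular subgraphs in graphs of sufficient density (such as a Pyber--R\"odl--Szemer\'edi type result, or the more recent work on the Erd\H{o}s--Sauer problem). The threshold $d \geq C \log \log n$ in the statement exactly matches the classical threshold for the existence of regular subgraphs in dense graphs, which strongly suggests this is the right framework.

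Concretely, the plan would run as follows. First, apply Theorem \ref{thm:polylog_average_deg} with a boosted target average degree $s'$ equal to a suitable constant multiple of $s$, or to $\Theta(\log \log n)$ when $d$ is near the lower end of the permitted range. This should yield a subgraph $H$ of $G$ of average degree at least $s'$ on at most $n d^{-s'/(s'-2)} (\log d)^{O(1)}$ vertices. Next, apply Lemma \ref{lemma:mindeg} to pass to a subgraph $H' \subseteq H$ of minimum degree $\Omega(s')$, and then invoke a theorem guaranteeing an $s$-regular subgraph of $H'$. By choosing $s'$ appropriately in terms of $s$ and $\log n$, the resulting $s$-regular subgraph should have the desired number of vertices.

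The main obstacle is that the upper bound in Theorem \ref{thm:polylog_average_deg} \emph{degrades} as $s'$ grows: the exponent $s'/(s'-2)$ decreases to $1$ as $s' \to \infty$, so a direct chaining of the two stages produces a subgraph that is larger than what the conjecture permits by a multiplicative factor of order $d^{s/(s-2) - s'/(s'-2)}$, which is polynomial rather than polylogarithmic in $d$. To close this gap one would need either (i) a sharper form of Theorem \ref{thm:polylog_average_deg} producing a subgraph of minimum degree at least $s$ on $n d^{-s/(s-2)} (\log n)^{O(1)}$ vertices, or (ii) a direct construction of the regular subgraph through the balanced-tree machinery, by refining the counting lemmas (most notably Lemma \ref{lem:find many trees}) to locate copies of a balanced tree $T$ whose union is not merely dense on average but already has minimum degree $s$ or carries an absorbing structure that can be regularized to an $s$-regular subgraph. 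Either route appears to require genuinely new ideas, which is consistent with the remark in the introduction that the minimum-degree variant of Theorem \ref{thm:polylog_average_deg} is completely open for every $s \geq 3$; interfacing with the $\log \log n$-type theory of regular subgraphs in dense graphs seems to be an unavoidable ingredient.
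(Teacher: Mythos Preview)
The statement you are attempting to prove is labelled \textbf{Conjecture} in the paper, not Theorem or Proposition, and the paper provides no proof of it. It appears in the Concluding Remarks section precisely as an open problem. The paper explicitly notes that even the special case $d\leq (\log n)^{C(s-2)/s}$, which merely asserts the existence of an $s$-regular subgraph with no size constraint, is the Erd\H{o}s--Sauer problem, resolved only very recently; and earlier the paper remarks that the minimum-degree variant of Theorem~\ref{thm:polylog_average_deg} is ``completely open for every $s\geq 3$'' and that the authors' methods ``do not seem to be adaptable for this problem.''

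Your proposal is therefore not so much a proof as an honest analysis of why the obvious two-stage strategy fails, and on that point you are entirely correct: boosting the target average degree to $s'>s$ and then extracting a regular subgraph loses a polynomial factor $d^{\,s/(s-2)-s'/(s'-2)}$, which the conjectured polylogarithmic error cannot absorb. Your diagnosis that one would need either a minimum-degree strengthening of Theorem~\ref{thm:polylog_average_deg} or a direct regularization of the balanced-tree union is exactly the obstacle the authors flag. In short, there is nothing to compare against: the paper does not claim to prove Conjecture~\ref{conj1}, and your write-up correctly arrives at the conclusion that a proof would require genuinely new ideas.
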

	
	Note that in the special case where $d\leq (\log n)^{C(s-2)/s}$, Conjecture \ref{conj1} asserts the existence of an $s$-regular subgraph, without a requirement on its order. This is the well studied Erd\H os--Sauer problem which was resolved very recently in \cite{JS22}. It was shown there that for large $C=C(s)$, every $n$-vertex graph with average degree at least $C\log \log n$ has an $s$-regular subgraph. This is tight, as an old construction of Pyber, R\"odl and Szemer\'edi \cite{PRSz95} shows that there is some $c>0$ such that there are $n$-vertex graphs with average degree at least $c\log \log n$ and no $s$-regular subgraph.
	
	On the other hand, when $d$ is very large and we are looking for a subgraph of bounded order, we have the following conjecture generalizing Theorem~\ref{thm:bounded_size_average_deg}. This conjecture quantifies Problem 7.1 from \cite{JN17}.
	
	\begin{conjecture} \label{conj2}
		For every integer $s\geq 3$ and $\eps>0$, there is a positive integer $t$ such that the following holds for all sufficiently large $n$. Let $G$ be an $n$-vertex graph of average degree at least $ n^{1-\frac{2}{s}+\eps}$. Then $G$ contains an $s$-regular subgraph on at most $t$ vertices.
	\end{conjecture}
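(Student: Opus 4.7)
The plan is to invoke Lemma \ref{lem:four conditions} with a suitable choice of fixed integer parameters $t$, $a$, and $r$, depending only on $\rho$ and $\eps$, and verify its four hypotheses. Since Theorem \ref{thm:bounded size} is an equivalent reformulation of Theorem \ref{thm:bounded_size_average_deg} with $s = 2\rho$, finding such $r = r(\rho, \eps)$ that triggers the conclusion of the lemma whenever $n$ is large and $d \geq n^{1-1/\rho+\eps}$ is exactly what is needed.

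The first step is to pick $t$ and $a$. I want $t-a$ to be just slightly less than $t(1-1/\rho)$, so that the exponent of $n$ appearing on the right-hand side of condition \ref{eqn:4}, namely $t-a$ from the factor $\binom{n}{t-a} \leq n^{t-a}$, falls short of the exponent $(1-1/\rho+\eps)(t-1)$ forced on the left by $d \geq n^{1-1/\rho+\eps}$. Setting $a = \lfloor t/\rho \rfloor - 1$ achieves $t - a \leq t - t/\rho + 2$, and the $\eps(t-1)$ slack from the lower bound on $d$ must absorb these $O(1)$ discrepancies. Taking $t = \lceil 4\rho/\eps \rceil$ guarantees $\eps t \geq 4\rho$, which will be the amount of slack I need. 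After that, I will take $r$ large in terms of $\rho$ and $\eps$ (the lemma allows any such value once $t$ and $a$ are fixed).

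With these choices, conditions \ref{eqn:2} and \ref{eqn:3} follow immediately: for $t \geq 4\rho$ one has $a \geq t/\rho - 2 \geq t/(2\rho)$, and $r$ is taken explicitly large. Condition \ref{eqn:1} reduces to checking $a\rho/(t-1) \leq 1 - t/(r-t)$; using $a \leq t/\rho - 1$, the left side is at most $(t-\rho)/(t-1) = 1 - (\rho-1)/(t-1)$, so making $r$ large enough that $t/(r-t) \leq (\rho-1)/(t-1)$ closes the gap. The substantive condition is \ref{eqn:4}. Since $r$, $a$, $t$, and the constant $c_0 = c_0(\rho, \lceil 2\rho\rceil + 1)$ are all fixed in terms of $\rho$ and $\eps$, every factor in that inequality other than the powers $d^{t-1}$ and $\binom{n}{t-a} \leq n^{t-a}$ is $O_{\rho,\eps}(1)$, so for $n$ large the whole condition reduces to comparing exponents of $n$. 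I therefore need $(1 - 1/\rho + \eps)(t-1) > t - a$. Expanding, the left side equals $(t-1) - (t-1)/\rho + \eps(t-1) \geq t - t/\rho + \eps t - 2 \geq (t-a) + (\eps t - 4)$, which is strictly positive because $\eps t \geq 4\rho > 4$.

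The main obstacle is not conceptual but rather a small bookkeeping subtlety: one must verify that a single $r = r(\rho,\eps)$, chosen before $n$ enters the picture, can simultaneously be taken large enough for condition \ref{eqn:1} and small enough (i.e., constant) that the polynomial factors $r$ and $(2\rho r/a)^{3t}$ on the right of condition \ref{eqn:4} are swallowed by the exponential-in-$n$ gap on the left. This is automatic once one notes that the only $n$-dependent quantities in condition \ref{eqn:4} are $d^{t-1}$ and $\binom{n}{t-a}$, so after fixing $\rho, \eps, t, a, r$ one lets $n$ tend to infinity last.
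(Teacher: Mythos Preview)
The argument you give is a correct proof of Theorem~\ref{thm:bounded size} (equivalently Theorem~\ref{thm:bounded_size_average_deg}), and in fact matches the paper's own proof of that theorem almost line for line. But the statement you were asked to prove is Conjecture~\ref{conj2}, not Theorem~\ref{thm:bounded_size_average_deg}. The conclusion of Conjecture~\ref{conj2} is that $G$ contains an \emph{$s$-regular} subgraph on at most $t$ vertices; the conclusion delivered by Lemma~\ref{lem:four conditions} (via ``$G$ is not $(\rho,r)$-sparse'') is only that $G$ contains a subgraph on at most $r$ vertices with \emph{average degree} at least $2\rho=s$. These are not the same, and the gap between them is precisely the content of the conjecture.

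Nothing in the balanced-tree machinery of the paper produces regularity: Lemma~\ref{lem:union of balanced trees} bounds $e(H)$ from below, which controls average degree, but says nothing about how those edges are distributed among the vertices of $H$. As the paper itself remarks immediately after stating Conjecture~\ref{conj2}, the conjecture is known only when $s$ is even (via blow-ups of even cycles) or $s=3$, and is open for odd $s\geq 5$. So your proposal does not establish the stated result; it reproves a strictly weaker theorem that the paper already contains.
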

	
	As we remarked earlier, Conjecture \ref{conj2} is known to be true if $s$ is even \cite{J20}, or $s=3$ \cite{J21}.
	
	\subsection{Uniform hypergraphs}
	
	Another interesting direction one may explore is the analogous question for uniform hypergraphs, which was also considered by Feige and Wagner \cite{FW16}.
	
	\begin{problem}\label{problem:hyp}
		Let $r,n$ be positive integers, and $s,d>1$ be real numbers. Determine the asymptotic value of the smallest $t=t_r(n,d,s)$ such that every $r$-uniform hypergraph on $n$ vertices of average degree at least $d$ contains a subhypergraph on at most $t$ vertices of average degree at least $s$. 
	\end{problem}
	
	This problem is closely related to another well known conjecture of Feige \cite{F08} about \emph{even covers} of hypergraphs. An even cover of a hypergraph is a non-empty subhypergraph in which each vertex is contained in an even number of edges. Feige conjectured that every $r$-uniform hypergraph with $n$ vertices and average degree $d$ contains an even cover on at most $nd^{-\frac{2}{r-2}}\mbox{polylog}(n)$ vertices, which was recently settled by Guruswami, Kothari, and Manohar \cite{GKM}. Indeed, in order to find a small even cover, one needs to first guarantee a small subhypergraph of average degree at least~2.

	\vspace{0.3cm}
	\noindent	
	{\bf Acknowledgements.}
	We would like to thank Noga Alon for valuable discussions.

\end{document}